\def\fx{\frac{\partial f}{\partial x}}
\def\fy{\frac{\partial f}{\partial y}}
\def\fz{\frac{\partial f}{\partial z}}
\def\uOmega{\underline{\Omega}}
\def\bu{\bullet}
\def\coker{\operatorname{coker}}
\def\cone{\operatorname{cone}}
\def\ker{\operatorname{ker}}
\def\nil{\operatorname{nil}}
\def\oo{\otimes}
\def\sing{\mathrm{sing}}
\def\Ext{\operatorname{Ext}}
\def\Hom{\operatorname{Hom}}
\def\len{\operatorname{length}}
\def\Tot{\operatorname{Tot}}
\def\Proj{\operatorname{Proj}}
\def\Spec{\operatorname{Spec}}
\def\tors{\operatorname{tors}}
\def\zar{\mathrm{zar}}
\def\lra{\longrightarrow}
\newcommand{\mathdot}{{\mathbf{\scriptscriptstyle\bullet}}}
\newcommand{\Hcdh}{H_\cdh}
\def\red{\mathrm{red}}
\newcommand{\comment}[1]{}  %added by CAW
\def \ie{{\it i.e.,\ }}
\def\cF{\mathcal F}
\def\cG{\mathcal G}
\def\ctL{\mathcal{L}}
\def\cO{\mathcal O}
\def\cK{\mathcal K}
\newcommand{\C}{\mathbb{C}}
\newcommand{\bL}{\mathbb{L}} %to be consistent with NK paper
\newcommand{\Q}{\mathbb{Q}}
\newcommand{\R}{\mathbb{R}}
\newcommand{\Z}{\mathbb{Z}}
\newcommand{\N}{\mathbb{N}}
\newcommand{\bbH}{\mathbb H}
\def\cdh{\mathrm{cdh}}
\def\bu{\bullet}
\def\nil{\operatorname{nil}}
\def\Pic{\operatorname{Pic}}
\def\Sing{\operatorname{Sing}}
\def\map#1{{\buildrel #1 \over \lra}}
\def\lmap#1{{\buildrel #1 \over \longleftarrow}}
\newcommand{\SchF}{\mathrm{Sch}/F}
\numberwithin{equation}{section}
\theoremstyle{plain}
\newtheorem{thm}[equation]{Theorem}
\newtheorem{lem}[equation]{Lemma}
\newtheorem{prop}[equation]{Proposition}
\theoremstyle{definition}
\newtheorem{defn}[equation]{Definition}
\theoremstyle{remark}
\newtheorem{rem}[equation]{Remark}
\newtheorem{ex}[equation]{Example}
\newtheorem{subremark}{Remark}[equation] %for numbering
\newtheorem{subex}[subremark]{Example} %same numbering as subremarks
\begin{document}

\title{A negative answer to a question of Bass}

\author{G. Corti\~nas}
\thanks{Corti\~nas' research was supported by Conicet 
and partially supported by grants PICT 2006-00836, UBACyT X051, 
PIP 112-200801-00900, and MTM2007-64704 (Feder funds).}
\address{Dep. Matem\'atica, FCEyN-UBA\\ Ciudad Universitaria Pab 1\\
1428 Buenos Aires, Argentina}
\email{gcorti@gm.uba.ar}

\author{C. Haesemeyer}
\thanks{Haesemeyer's research was partially supported by NSF grant DMS-0652860}
\address{Dept.\ of Mathematics, University of California, Los Angeles CA
90095, USA}
\email{chh@math.ucla.edu}

\author{Mark E. Walker}
\thanks{Walker's research was partially supported by NSF grant DMS-0601666.}
\address{Dept.\ of Mathematics, University of Nebraska - Lincoln,
  Lincoln, NE 68588, USA}
\email{mwalker5@math.unl.edu}

\author{C. Weibel}
\thanks{Weibel's research was
supported by NSA grant MSPF-04G-184 and the Oswald Veblen Fund.}
\address{Dept.\ of Mathematics, Rutgers University, New Brunswick,
NJ 08901, USA} \email{weibel@math.rutgers.edu}

\date{\today}

\begin{abstract}
We address Bass' question, on whether $K_n(R)=K_n(R[t])$
implies $K_n(R)=K_n(R[t_1,t_2])$. In a companion paper,  
we establish a positive answer to this question when $R$ is 
of finite type over a field of infinite transcendence degree over the
rationals. Here we provide an example of an 
isolated surface singularity over a number field  for
which the answer the Bass' question is ``no'' when $n=0$.
\end{abstract}
\maketitle

\section*{Introduction}

In 1972, H. Bass posed the following question
(see \cite{Bass73}, question (VI)$_n$):
\begin{center}
Does $K_n(R)=K_n(R[t])$ imply that $K_n(R)=K_n(R[t_1,t_2])$?
\end{center}
Bass' question was inspired by
Traverso's theorem \cite{Traverso}, from which it follows that $\Pic(R)
= \Pic(R[t])$ implies $\Pic(R) = \Pic(R[t_1,t_2])$. 

In the companion paper \cite{nk}, we show that the answer to 
Bass' question is ``yes'' for rings
of finite type over fields having infinite
transcendence degree over $\Q$. In this paper, we give an
example showing the
answer is ``no'' in general, even when $n=0$. That is, there is a ring
$R$ for which every finitely generated projective module over $R[t]$
is the extension, up to stable isomorphism, of a projective module
over $R$, but not every finitely generated projective module over
$R[t_1, t_2]$ is so extended.

Our example is the isolated hypersurface singularity 
$$
R = F[x,y,z]/(z^2 + y^3 + x^{10} + x^7 y),
$$
where $F$ is any algebraic field extension of $\Q$. 
(The proof is given in Theorem \ref{thm:dBcompute}.)
This example was first studied by J.~ Wahl \cite{Wahl}. 

Our proof that $R$ indeed gives a negative answer to Bass' question
uses what we call {\em generalized du Bois invariants}, $b^{p,q}$,
of an isolated singularity in characteristic zero; see \eqref{eq:dBinvars}.
The (ordinary) du Bois invariants were  introduced
by Steenbrink \cite{Steenbrink} using the du Bois complexes
$\uOmega^p$, $p \geq 0$. They can equivalently be defined using 
sheaf cohomology in Voevodsky's $cdh$ topology thanks to the 
natural isomorphism (see Lemma \ref{lem:duBois=cdh})
$$
\bbH^*_\zar(X, \uOmega^p_X) \cong H^*_\cdh(X, \Omega^p).
$$
The generalized du Bois invariants are defined as the
cohomology of the complex obtained by patching together the du Bois
complexes $\uOmega^p$  and the higher cotangent complexes used to 
define Andr\'e-Quillen homology.  
The Euler characteristics of these patched together complexes, 
written $\chi^p$ for $p \geq 0$, turn out to be constant in suitably nice
families (see Theorem \ref{thm:chi}). In particular, we prove in 
Proposition \ref{prop:Wahl} that $\chi^p(R_a)$ is independent of $a\in F$
where $R_a = F[x,y,z]/(z^2 + y^3 + x^{10} + ax^7 y)$. Since the ring $R_0$ 
is graded, the values of $\chi^p(R_0) = \chi^p(R_1)$ are easy to compute,
and these computations allow us to prove our assertion about $R = R_1$.

\subsection*{Notation} 
Throughout this paper, $F$ denotes a field of characteristic zero.
By ``a scheme over $F$'' we mean a separated scheme  
of finite type over $F$. 
We write $\SchF$ for the category of all
such schemes. Unless otherwise stated, Hochschild homology and modules of
K\"ahler differentials will be taken relative to $F$. That is, we write
$\Omega^q_X$ and $HH_q(X)$ for $\Omega^q_{X/F}$ and $HH_q(X/F)$.

\section{On $cdh$-cohomology and nil $K$-theory}

For any functor $G$ from rings to an abelian category,
$NG$ is the functor with $NG(R)$ defined to be the kernel of the map $G(R[t])\to G(R)$ induced by
evaluation at $t=0$. Since $G(R[t])\to G(R)$ is
split by the canonical map $G(R) \to G(R[t])$, the functor $NG$ is a summand
of the functor $R \mapsto  G(R[t])$.
We define $N^2G=N(NG)$.

It is convenient to phrase Bass' question in terms of Bass' Nil groups,
$NK_*(R)$, as follows:
\begin{center}
Does $NK_n(R)=0$ imply that $N^2K_n(R)=0$?
\end{center}

Our example uses the following theorem from the companion paper \cite{nk}.
(The notation in this theorem is discussed below; 
the particular forms of $V$ and $W$ in this theorem reflect extra 
structure not relevant for this paper.)

\begin{thm} \cite[Theorems 0.1 and 0.7]{nk} \label{mainnk}
Let $R$ be a normal domain of dimension $2$ that is
of finite type over $\Q$. Then, letting $V$ and $W$ denote the
countably-infinite dimensional $\Q$
vector spaces $t \Q[t]$ and $\Omega^1_{\Q[t]}$, we have:
\begin{enumerate}
\item[a)] $NK_0(R) \cong H^1_{\cdh}(R,\Omega^1)\otimes_\Q V$.
\item[b)] $NK_{-1}(R) \cong H^1_{\cdh}(R,\cO)\otimes_\Q V$.
\item[c)] If $NK_0(R) = 0$, then  $K_0(R[t_1, t_2]) \cong K_0(R)
  \oplus \left( NK_{-1}(R) \otimes_\Q W \right)$.
\end{enumerate}
\end{thm}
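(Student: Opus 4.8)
The plan is to convert $NK_*(R)$ into the failure of cdh-descent --- first for negative cyclic homology, then for Hochschild homology, then for K\"ahler differentials --- and then to use that $R$ is a normal surface to collapse the answer into the stated low-degree groups. I would begin with the theorem of Corti\~nas--Haesemeyer--Schlichting--Weibel --- which rests on Corti\~nas' theorem that infinitesimal $K$-theory $K^{\mathrm{inf}}=\operatorname{fib}(K\to HN)$ satisfies cdh-descent, together with Haesemeyer's identification $\bbH_{\cdh}(-,K)\simeq KH$ --- giving, for $X\in\SchF$, a natural equivalence
$$
\operatorname{fib}\bigl(K(X)\lra KH(X)\bigr)\;\simeq\;\operatorname{fib}\bigl(HN(X/\Q)\lra\bbH_{\cdh}(X,HN(-/\Q))\bigr).
$$
Since $KH$ is $\A^1$-invariant, $NKH=0$, so applying the exact operator $N$ (which commutes with $\bbH_{\cdh}$) identifies $NK(R)$ with the cdh-descent defect of $N\,HN(-/\Q)$. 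I would then reduce this to Hochschild homology: as $R[t]=R\otimes_\Q\Q[t]$ and the reduced cyclic mixed complex of the smooth algebra $\Q[t]$ is $\bigl(t\Q[t]\xrightarrow{\ d\ }\Q[t]\,dt\bigr)$ in degrees $0,1$ with $d$ an isomorphism (characteristic zero), that mixed complex is $C_*(S^1;\Q)\otimes_\Q V$ where $V=t\Q[t]$; using $\A^1$-invariance of periodic cyclic homology to replace $N\,HN_n$ by $N\,HC_{n-1}$, and that homotopy $S^1$-orbits of an induced complex return the underlying complex, one gets $N\,HN_n(R)\cong HH_{n-1}(R/\Q)\otimes_\Q V$, compatibly with $\bbH_{\cdh}$.

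Feeding in the Hochschild--Kostant--Rosenberg equivalence $HH_n(-/\Q)\simeq\Omega^n$ of cdh-sheaves (valid because cdh-sheafification is detected on smooth schemes via resolution of singularities), the above yields
$$
NK_n(R)\;\cong\;H_{n-1}\!\Bigl(\operatorname{fib}\bigl(HH(R/\Q)\lra\bbH_{\cdh}(R,HH(-/\Q))\bigr)\Bigr)\otimes_\Q V,
$$
where $\bbH_{\cdh}(R,HH(-/\Q))$ has $m$-th homotopy group $\bigoplus_p H^{p-m}_{\cdh}(R,\Omega^p)$. Here is where normality and dimension $2$ enter: for a resolution $f\colon\widetilde X\to\Spec R$ the fibres are $1$-dimensional with $R^1f_*\omega_{\widetilde X}=0$ (Grauert--Riemenschneider), and $\Spec R$ is affine, so chasing the cdh-descent spectral sequence attached to the resolution blow-up square gives $H^s_{\cdh}(R,\Omega^t)=0$ whenever $s\ge2$ or $t\ge3$. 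Consequently the descent spectral sequence for $\bbH_{\cdh}(R,HH(-/\Q))$ degenerates at $E_2$, and the long exact sequence of the fibre above --- together with $HH_0(R)=R\xrightarrow{\ \sim\ }H^0_{\cdh}(R,\cO)$ and $HH_{<0}(R)=0$ --- leaves precisely $H^1_{\cdh}(R,\Omega^1)\otimes_\Q V$ in degree $0$ and $H^1_{\cdh}(R,\cO)\otimes_\Q V$ in degree $-1$, which is (a) and (b). The $V$-factors carry the $t$-adic (big Witt vector) module structure referred to in the statement.

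For (c) I would iterate the standard splitting $K(A[t_1,t_2])\simeq K(A)\oplus NK(A)^{\oplus2}\oplus N^2K(A)$, so that the hypothesis $NK_0(R)=0$ reduces the problem to computing $N^2K_0(R)$. Applying $N$ a second time to the fibre sequence of the previous paragraph and invoking the two-variable Hochschild K\"unneth formula --- so that the reduced mixed complex of $\Q[s]$ now also contributes $W_s=\Omega^1_{\Q[s]}$ in degree $1$ alongside $V_s=s\Q[s]$ in degree $0$, mirroring $\Omega^1_{R[s]/\Q}=\Omega^1_{R/\Q}\otimes_RR[s]\oplus R[s]\,ds$ --- the same collapse gives
$$
N^2K_0(R)\;\cong\;\bigl(H^1_{\cdh}(R,\Omega^1)\otimes_\Q V_s\;\oplus\;H^1_{\cdh}(R,\cO)\otimes_\Q W_s\bigr)\otimes_\Q V.
$$
Since $NK_0(R)=0$ forces $H^1_{\cdh}(R,\Omega^1)=0$ (because $V\ne0$), the first summand dies, and $W_s\cong W$; so $N^2K_0(R)\cong H^1_{\cdh}(R,\cO)\otimes_\Q V\otimes_\Q W\cong NK_{-1}(R)\otimes_\Q W$, giving $K_0(R[t_1,t_2])\cong K_0(R)\oplus\bigl(NK_{-1}(R)\otimes_\Q W\bigr)$.

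Granting the Corti\~nas--Haesemeyer--Schlichting--Weibel input, I expect the main obstacle to be the middle step: correctly identifying $N\,HN(R/\Q)$ --- rather than merely $N\,HH(R/\Q)$ --- via the $\widetilde{HP}=0$ reduction and the mixed-complex / $S^1$-module bookkeeping, together with verifying the surface vanishing $H^s_{\cdh}(R,\Omega^t)=0$ for $s\ge2$ or $t\ge3$ so that precisely $H^1_{\cdh}(R,\Omega^1)$ and $H^1_{\cdh}(R,\cO)$ survive and no $H^0_{\cdh}$ or $H^2_{\cdh}$ terms intervene. Keeping track of the $V$- and $W$-module structures throughout, which is what makes the two-variable iteration in (c) clean, is the other point that needs care.
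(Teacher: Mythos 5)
The paper does not prove this theorem --- it is imported verbatim from the companion paper \cite{nk} (Theorems 0.1 and 0.7) --- so there is no internal proof here to compare against. Your sketch faithfully reconstructs the route taken in \cite{nk} and its precursors \cite{chsw,chw}: the equivalence $\operatorname{fib}(K\to KH)\simeq\operatorname{fib}(HN\to\bbH_{\cdh}(-,HN))$, the identification $N\,HN_n(R)\cong HH_{n-1}(R/\Q)\otimes_\Q t\Q[t]$ via the induced mixed complex of $\Q[t]$, the HKR/cdh collapse on a normal affine surface (seminormality in degree $0$, $H^{\ge 2}_{\cdh}=0$ for affine surfaces, and $a^*\Omega^p=0$ for $p>2$), and the second-variable K\"unneth that produces the $\Omega^1_{\Q[t]}$ factor in part (c). The two points you pass over --- that cdh-fibrant Hochschild homology satisfies the $\A^1$-K\"unneth formula $H^n_{\cdh}(X\times\A^1,\Omega^p)\cong H^n_{\cdh}(X,\Omega^p)\otimes_\Q\Q[t]\oplus H^n_{\cdh}(X,\Omega^{p-1})\otimes_\Q\Omega^1_{\Q[t]}$ needed to commute $N$ past $\bbH_{\cdh}$, and that all comparison maps respect the Hodge decomposition --- are precisely the technical lemmas that \cite{nk} supplies, so your outline is correct and is essentially their argument.
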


In particular, 
for $R$ as in Theorem \ref{mainnk},
 the answer to
Bass' question with $n=0$ 
is ``no'' if and only if
$H^1_{\cdh}(R,\Omega^1) = 0$ and 
$H^1_{\cdh}(R,\cO) \ne 0$.

Recall that the $cdh$ topology on $\SchF$, written $(\SchF)_\cdh$, 
is the Grothendieck topology generated by Nisnevich
open covers and abstract blow-up squares \cite{SVBK}. 
If $\cG$ is a presheaf on $\SchF$,
by $H^*_{\cdh}(X, \cG)$, we mean the $cdh$-sheaf cohomology of the
$cdh$-sheafification $\cG$. For example, $H^*_{\cdh}(X, \Omega^p)$, for
$p \geq 0$, refers to the $cdh$-cohomology of the 
$cdh$-sheafification of $Y \mapsto \Omega^p_Y$. (Of course, 
$\Omega^0_{Y} = \cO_Y$.) When $X = \Spec R$ for an $F$-algebra $R$
of finite type over $F$, we usually
write $H^*_{\cdh}(R, \cG)$ for $H^*_{\cdh}(\Spec R, \cG)$. 

\section{Generalized du Bois invariants, $\chi^p$ and deformations}
\label{sec:Equiresolution}

In this section, we construct invariants of isolated singularities,
called the {\em generalized du Bois invariants} $b^{p,q} \in \N$,
which for $q>0$ coincide with the du Bois invariants introduced by
Steenbrink \cite{Steenbrink}. For isolated singularities that are also
local complete intersections, for each fixed $p$ only a finite number
of the integers $b^{p,q}$ are nonzero. Thus it makes sense to define
$\chi^p := \sum_q (-1)^q b^{p,q}$ in this situation. The main result
of this section is Theorem \ref{thm:chi}, that the $\chi^p$ are
invariant under suitably nice deformations.

Recall that we work over a field $F$ of characteristic zero.
Several of the results we quote from here on --- in
particular anything involving du Bois complexes --- have been proved
under the assumption that  $F = \C$; however, flat base
change implies that they all remain valid over an arbitrary field $F$ of
characteristic $0$.

Fix a scheme $X$ of finite type over $F$ and 
choose a proper simplicial hyperresolution 
$\pi:Y_\mathdot\to X$. Following \cite{duBois} we fix $p$ and
we consider the $p$-th {\em du Bois complex}
\[
\uOmega^p_X=\R\pi_*\Omega^p_{Y_\mathdot}.
\]
Du Bois shows in \cite{duBois} that the assignment $X\mapsto \uOmega^p_X$
is natural in $X$ up to unique isomorphism in the derived category.
The relevance for us lies in the fact that the Zariski hypercohomology 
of the complex $\uOmega^p_X$ computes $H^*_\cdh(X,\Omega^p)$:

\begin{lem}\label{lem:duBois=cdh}
Let $X$ be a scheme of finite type over $F$ and $p\ge 0$. 
Then there is a natural isomorphism
$$
\bbH^*_\zar(X, \uOmega^p_X) \cong H^*_\cdh(X, \Omega^p).
$$
\end{lem}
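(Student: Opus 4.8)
The plan is to compare the du Bois complex $\uOmega^p_X$ with the $cdh$-sheafification of $\Omega^p$ by passing through a common resolution. The starting point is a smooth proper hyperresolution $\pi: Y_\mathdot \to X$, which exists by Hironaka resolution in characteristic zero; by construction $\uOmega^p_X = \R\pi_* \Omega^p_{Y_\mathdot}$ computes the Zariski hypercohomology in question. On the other hand, since each $Y_n$ is smooth, the $cdh$-sheafification of $\Omega^p$ restricted to each $Y_n$ agrees with the Zariski sheaf $\Omega^p_{Y_n}$ (smooth schemes compute $cdh$-cohomology of $\Omega^p$ correctly — this is the key input, due to Geisser or the cdh-descent results of Cortiñas--Haesemeyer--Schlichting--Weibel), so $\R\Gamma_\cdh(Y_n, \Omega^p) \simeq \R\Gamma_\zar(Y_n, \Omega^p_{Y_n})$ for each $n$.

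First I would set up $cdh$-descent for the hyperresolution: a smooth proper hyperresolution is in particular a $cdh$-hypercover (abstract blow-up squares and Nisnevich covers generate the topology, and a hyperresolution is built by iterating exactly such squares), so $cdh$-cohomology satisfies descent along it, giving
$$
\R\Gamma_\cdh(X, \Omega^p) \simeq \operatorname{Tot}\left( n \mapsto \R\Gamma_\cdh(Y_n, \Omega^p) \right).
$$
Then I would substitute the smooth-case identification $\R\Gamma_\cdh(Y_n, \Omega^p) \simeq \R\Gamma_\zar(Y_n, \Omega^p_{Y_n})$ term-by-term. The right-hand side of this totalization is then exactly $\R\Gamma_\zar(X, \R\pi_* \Omega^p_{Y_\mathdot}) = \bbH^*_\zar(X, \uOmega^p_X)$, by definition of $\uOmega^p_X$ and the fact that $\R\pi_*$ commutes with forming the totalization. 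Chasing these identifications through gives the desired isomorphism, and its naturality follows from the naturality of $cdh$-descent together with du Bois's result that $X \mapsto \uOmega^p_X$ is functorial in the derived category (so one checks independence of the choice of hyperresolution automatically, or else notes that any two hyperresolutions are dominated by a third).

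The main obstacle is pinning down the smooth-case statement precisely in the generality needed — that for a smooth $F$-scheme $U$ (not necessarily proper), the natural map $\Omega^p_{U/F} \to a_\cdh \Omega^p$ induces an isomorphism on hypercohomology, equivalently that $\Omega^p$ already satisfies $cdh$-descent among smooth schemes. This is genuinely nontrivial: it rests on the fact that $\Omega^p$ has the ``Gersten'' / cdh-excision properties established via the behavior of differentials under blow-ups of smooth centers in smooth varieties, and one must be careful that the relevant results are available over an arbitrary field $F$ of characteristic zero rather than just over $\C$ — this is exactly the flat-base-change remark made above, so once the $\C$ case is cited the extension is routine. A secondary, more bookkeeping-level obstacle is ensuring that the totalization/$\R\pi_*$ manipulations are compatible, i.e. that the descent spectral sequence for $cdh$-cohomology along $Y_\mathdot$ matches the hypercohomology spectral sequence for $\R\pi_*\Omega^p_{Y_\mathdot}$; this is a standard comparison of two spectral sequences with isomorphic $E_1$-pages and compatible differentials.
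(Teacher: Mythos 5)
Your proposal is correct and follows essentially the same route as the paper: the two inputs you isolate --- that $\Omega^p$ computes its own $cdh$-cohomology on smooth schemes, and that $cdh$-cohomology satisfies descent along a proper smooth hyperresolution --- are exactly the citations the paper uses (\cite[2.5]{chw} and \cite[4.3]{toric}, respectively), combined in the same order. The only cosmetic difference is that the paper runs the argument at the level of complexes of Zariski sheaves $\R a_*a^*\Omega^p|_X \simeq \R\pi_*\Omega^p_{Y_\mathdot}$ before taking hypercohomology, whereas you work directly with global sections.
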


A very similar observation for the $h$-topology has been made by
Ben Lee \cite{BenLee}; the proof we give here is based upon
the proof of \cite[4.1]{toric}.

\begin{proof} 
Recall that $H^*_\cdh(X,\Omega^p)$ is
the Zariski hypercohomology of the complex $\R a_*a^*\Omega^p|_X$,
where $a: (\SchF)_\cdh \to (\SchF)_\zar$ is the morphism of sites
and $|_X$ denotes the restriction from the big Zariski site
$(\SchF)_\zar$ to $X_\zar$. Let $Y_\bu \to X$ be a simplicial
hyperresolution.
By \cite[2.5]{chw}, we have a quasi-isomorphism on $X_\zar$
$$
\Omega^p_{Y_n} \map{\simeq} \R a_*a^*\Omega^p|_{Y_n}
$$
since each $Y_n$ is smooth.
Using also  \cite[4.3]{toric},
we have a diagram of equivalences
\[
\R a_*a^*\Omega^p|_X ~\map{\simeq}~ \R\pi_*(\R a_*a^*\Omega^p|_{Y_\mathdot})
\lmap{\simeq} \R\pi_*\Omega^p_{Y_\mathdot}=\uOmega^p_X.
\]
Applying $\bbH^*_\zar(X,-)$ yields
$H^*_\cdh(X,\Omega^p) \cong \bbH^*_\zar(X, \uOmega^p)$.
\end{proof}

\subsection*{Isolated singularities}
Suppose that $\Sing(X)$ is an isolated point $x$.
Choose a good resolution $\pi:Y\to X$, meaning that $Y$ is smooth,
$\pi$ is proper and an isomorphism away from $x$,
and $E=\pi^{-1}(x)_\red$ is a normal crossings divisor
with smooth components.
Then by \cite[4.8, 4.11]{duBois} we have a distinguished triangle
$$
0 \to \uOmega^p_X \to \R\pi_* \Omega^p_Y \oplus \Omega^p_{x}
\to \R\pi_* \uOmega^p_E \to 0.
$$
To rewrite this, let $E_1,\dots,E_t$ denote 
the (smooth) components of $E$, and define
\begin{equation}\label{Yn-point}
Y_n =\begin{cases} Y \amalg x_0 & n=0\\
\coprod_{i_1<\cdots<i_n}E_{i_1}\times_Y\cdots\times_Y E_{i_n} & n>0.
\end{cases}
\end{equation}
By \cite[4.10]{duBois}, the complex $\uOmega^p_E$ is
quasi-isomorphic to (the total complex of)
$$
\uOmega^p_{Y_1} \to \uOmega^p_{Y_2} \to \cdots.
$$
The maps in this complex are given by the usual alternating sum of
restriction maps, since the complex arises from a 
coskeletal hyperresolution of $E$. Generically writing $\pi:Y_n\to X$ 
for the canonical map from $Y_n$ to $X$, we have
\begin{equation} \label{E1}
\uOmega^p_X \simeq
\Tot \biggl(\R\pi_*\uOmega^p_{Y_0} \to \R\pi_*\uOmega^p_{Y_1} \to
  \R\pi_*\uOmega^p_{Y_2} \to \cdots\biggr).
\end{equation}

Now suppose that $\dim(X)=2$. Because 
$E_i \times_Y E_j \times_Y E_l = \emptyset$ for distinct $i,j,l$  
and $\Omega^p_{E_i \times_Y E_j}=0$ for $i\ne j$ and $p > 0$, \eqref{E1}
reduces to:
$
\uOmega^p_X \simeq \Tot\!\left(\R \pi_* \Omega^p_Y\!\to
    \bigoplus_i \R \pi_* \Omega^p_{E_i} \right)$
for $p>0$, and
\[
\uOmega^0_X \simeq
\Tot\biggl(\R\pi_*\cO_Y \oplus \cO_{x} \to
\bigoplus_i \R\pi_* \cO_{E_i} \to \bigoplus_{i<j} \R\pi_*\cO_{E_i \times_Y E_j}
\biggr).
\]
In other words, 
in the notation of \cite{Wahl},
\begin{equation}\label{eq:O(-E)}
\uOmega^0_X\simeq\R\pi_*\cO_Y(-E)\oplus\cO_x\quad\textrm{and}\quad
\uOmega^p_X \simeq \R \pi_* \left(\Omega^p_Y(\log E)(-E)\right),\quad p>0.
\end{equation}

\subsection*{Du Bois invariants}
Suppose for simplicity that $X=\Spec R$, where $R$ is a domain 
of finite type over $F$. For any $p \geq 0$, there is a map
from the $p$-th higher cotangent complex $\ctL^p_{X}$ 
(see \cite[3.5.4]{LodayHC92})
to the $p$-th du Bois complex $\uOmega^p_X$, obtained by composing the
isomorphism $H_0(\ctL^p_X) \cong \Omega^p_X$ and the natural map
$\Omega^p_X \to H^0(\uOmega^p_X)$.

\begin{defn}\label{def:CpX} 
Define the cochain complex $C^p_X$ of quasi-coherent
$\cO_X$-modules by
$$
C^p_{X} := \cone(\ctL^p_{X} \to \uOmega^p_X).
$$
That is, we have a triangle
$\ctL^p_X\to\uOmega^p_X\to C^p_X\to\ctL^p_X[1]$.
\end{defn}

In the language of \cite{nk}, the complex $C^p_X$ gives the homotopy
fiber $\cF_{HH}$ of the map from the Hochschild complex of $X$ to its 
$cdh$-fibrant replacement:
\begin{equation}\label{hcp=hfhh}
\bbH^i(C^p_X)=H^{i+1-p}(\cF_{HH}^{(p)}(X)).
\end{equation}
Note that the hypercohomology sheaves of $C^p_X$ are
coherent because  the K\"ahler differentials are taken over $F$.
Using  Lemma \ref{lem:duBois=cdh}, \cite[4.5.13]{LodayHC92}
and \cite[Lemma 3.4]{nk}, we conclude that: 
\begin{equation}\label{eq:FHC}
\bbH^q(C^p_X) =
\begin{cases}
\bbH^q(X, \uOmega^p) & \text{for $q \geq 1$} \\
\coker\left(\Omega^p_X \to \bbH^0(X, \uOmega^p)\right)
& \text{for $q = 0$} \\
\ker\left(\Omega^p_X \to \bbH^0(X,\uOmega^p)\right)
& \text{for $q =-1$} \\
D^{(p)}_{-1-q}(X) & \text{for $q \leq -2$,} \\
\end{cases}
\end{equation}
where $D^{(p)}_n$ denotes {\em Andr\'e-Quillen homology}. Recall that
$D^{(p)}_n(R) \cong HH^{(p)}_{p+n}(R)$, where 
$$
HH_* = \prod_{p \geq 0}
HH^{(p)}_*
$$ 
is the Hodge decomposition of Hochschild homology.

If $X$ has isolated singularities, then each of the hypercohomology modules
$\bbH^n(C^p_X)$ is of finite length. In this case we may define,
following and expanding on Steenbrink's definition \cite{Steenbrink},
the {\em generalized du Bois invariants} to be the numbers
\begin{equation}\label{eq:dBinvars}
b^{p,q} = b^{p,q}_X = \len\bbH^q(C^p_X), \quad \text{for $p \geq 0$ and
  $q \in \Z$.}
\end{equation}

\begin{subex} 
For $p=0$, we have $b^{0,q}=0$ if $q<0$. When $R$ is a domain,
   $b^{0,0}=\len_R(R^+/R)$,
where $R^+$ is the seminormalization of $R$, because
   $\ctL^0 = \cO_X$ and $H_\cdh^0(R,\cO) = R^+$ by \cite[2.5]{nk}.
If $q>0$ then \eqref{eq:O(-E)} yields $b^{0,q}=h^q(\cO_Y(-E))$.
\end{subex}

If, moreover, $X$ is locally a complete intersection, 
then $HH^{(p)}_n(R)=0$ for $n \gg 0$ (see \cite{FT}); hence it follows
from \eqref{eq:FHC} that $C^p_X$ is homologically bounded. 

\begin{defn} \label{def:chi} 
For a local complete intersection $X \in \SchF$ with only
isolated singularities, define $\chi^p(X)$ for $p \geq 0$ to be the Euler
characteristic of $C^p_X$:
$$
\chi^p(X) := 
\sum\nolimits_q (-1)^q b_X^{p,q}.
$$
\end{defn}

\begin{lem}\label{b-dR}
If $X = \Spec(R)$ for a ring $R$ that admits a non-negative grading
with $R_0 = k$, 
then
$\sum(-1)^p b_X^{p,q}=0$ for all $q$.
\end{lem}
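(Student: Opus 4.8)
The plan is to glue the complexes $C^p_X$ ($p\ge 0$) together along the exterior derivative and to deduce the assertion — which says exactly that for each fixed $q$ the complex $\bigl(\cdots\to\bbH^q(C^p_X)\to\bbH^q(C^{p+1}_X)\to\cdots\bigr)$ with the de Rham differential is acyclic — from the triviality of the de Rham cohomology of the cone $\Spec R$, refined weight by weight. Concretely: the maps $\ctL^p_X\to\ctL^{p+1}_X$ and $\uOmega^p_X\to\uOmega^{p+1}_X$ induced by $d$ are compatible with the structure maps $\ctL^p_X\to\uOmega^p_X$, so the $C^p_X$ form a complex of complexes $(C^\bu_X,d)$; its totalization is the homotopy fibre of the map from the derived de Rham complex of $X$ to the du Bois–de Rham complex of $X$. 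I would run the spectral sequence of this double complex obtained by taking hypercohomology of each column first: it has $E_1^{p,q}=\bbH^q(C^p_X)$, with $d_1$ induced by $d$, and its $E_2$-page is precisely the cohomology of the complexes $\bigl(\bbH^q(C^p_X)\bigr)_p$ (the spectral sequence converges, all the sheaves $\bbH^q(C^p_X)$ being coherent). So it is enough to show $E_2=0$.

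The grading now enters. Let $\theta$ be the Euler derivation of $R$ (acting on $R_n$ by multiplication by $n$); it extends to the Euler vector field on any good resolution $\pi\colon Y\to X$ taken equivariant for the $\mathbb{G}_m$-action coming from the grading. Then the towers $\ctL^\bu_X$, $\uOmega^\bu_X$, $C^\bu_X$ and the whole $E_1$-page carry a weight grading; $d$ is homogeneous of weight $0$, and contraction $\iota_\theta$ — against $\theta$ on the $\ctL^\bu_X$-tower, against the Euler vector field on $Y$ for the $\uOmega^\bu_X$-tower, compatibly — is a weight-$0$ operator lowering $p$ by one and satisfying Cartan's formula $d\iota_\theta+\iota_\theta d=L_\theta$, where $L_\theta$ acts on the weight-$w$ part as multiplication by $w$. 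Hence for every $w\ne 0$ the operator $\frac{1}{w}\iota_\theta$ is a contracting homotopy for the $d_1$-rows of the weight-$w$ part of $E_1$, so the weight-$w$ part of $E_2$ vanishes; this is just the graded Poincaré lemma, applied in parallel to all three towers.

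It then remains to treat weight $0$, and I expect this to be the delicate point. Here I would argue that the weight-$0$ part of $C^\bu_X$ is itself acyclic, in fact that $(C^p_X)_0\simeq 0$ for every $p$. On one side, a graded polynomial presentation $R=P/I$ shows that $\Omega^1_{R/k}$, hence $\ctL^p_X$ for $p\ge 1$, is built from graded free modules generated in positive weights, so $(\ctL^p_X)_0\simeq 0$ for $p\ge 1$, while $(\ctL^0_X)_0=(\cO_X)_0=k$. On the du Bois side, \eqref{eq:O(-E)} gives $\uOmega^p_X\simeq\R\pi_*\bigl(\Omega^p_Y(\log E)(-E)\bigr)$ for $p\ge 1$ and $\uOmega^0_X\simeq\R\pi_*\cO_Y(-E)\oplus\cO_x$; since $E=\pi^{-1}(x)_\red$ is the exceptional fibre over the vertex and the $\mathbb{G}_m$-action contracts $Y$ onto it — so the components of $E$ have positive-weight local equations, and the ideal sheaf $\cO_Y(-E)$ has no weight-zero sections along $E$ — the complexes $\R\pi_*\bigl(\Omega^p_Y(\log E)(-E)\bigr)$ are concentrated in positive weights. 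Thus $(\uOmega^p_X)_0\simeq 0$ for $p\ge 1$, while $(\uOmega^0_X)_0$ reduces to the skyscraper summand $\cO_x=k$, onto which $(\ctL^0_X)_0=k$ maps isomorphically (it is the inclusion of constants $R_0\hookrightarrow(R_\red^+)_0$, and $(R_\red^+)_0=k$). So $(C^0_X)_0\simeq\cone(k\to k)\simeq 0$ as well, and the weight-$0$ part of the $E_2$-page vanishes.

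Combining the two cases, $E_2=0$, so $\bigl(\bbH^q(C^p_X)\bigr)_p$ is an acyclic (bounded) complex of graded $R$-modules for each $q$; taking dimensions of the graded pieces then yields $\sum_p(-1)^p\dim_k\bbH^q(C^p_X)_w=0$ for every weight $w$, and hence $\sum_p(-1)^pb^{p,q}_X=0$ whenever the modules $\bbH^q(C^p_X)$ have finite length (as they do when $X$ is moreover a local complete intersection with isolated singularities). The main obstacle, as indicated, is the weight-$0$ computation on the du Bois side: the vanishing $(\uOmega^p_X)_0\simeq 0$ for $p\ge 1$ is exactly where one must invoke the twist by $\cO_Y(-E)$ in \eqref{eq:O(-E)} — without this twist $(\uOmega^p_X)_0$ would carry the cohomology of $\Proj R$ and the rows would not be exact — and it is the weight-refined form of the statement that the cone $\Spec R$ has trivial de Rham cohomology.
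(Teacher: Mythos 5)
Your reduction is the same as the paper's: for each fixed $q$ the identity $\sum_p(-1)^pb^{p,q}=0$ is exactly the assertion that the row $\bigl(\bbH^q(C^p_X)\bigr)_p$, equipped with the de Rham differential, is exact, and the paper's proof consists of \emph{quoting} this exactness --- from \cite[Example 3.9]{nk} for $q=-1$, $q=0$ and $q>0$ (the three displayed sequences), and from Goodwillie's theorem for $q\le-2$. What you add is the mechanism behind those citations: the Euler derivation $\theta$, the Cartan homotopy $d\iota_\theta+\iota_\theta d=L_\theta$ killing every weight $w\ne 0$, and a weight-$0$ analysis. For $q\le-2$ this is literally how Goodwillie's theorem for graded rings is proved, so in substance you are unwinding the paper's references rather than taking a new route. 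Two points of scope to flag: your weight-$0$ step invokes \eqref{eq:O(-E)}, which is established only for surfaces, whereas the lemma is stated for an arbitrary non-negatively graded $R$ (with isolated singularities, so that the $b^{p,q}$ are defined); and the compatibility of $d$ and $\iota_\theta$ across the maps $\ctL^p_X\to\uOmega^p_X$, needed to get well-defined operators on the cones $C^p_X$, deserves justification, since cones are not functorial and the contraction on the du Bois side must be built from an equivariant resolution.

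The one place where your argument has a real hole is the claim that $\R\pi_*\bigl(\Omega^p_Y(\log E)(-E)\bigr)$ is concentrated in positive weights. Your justification --- that $\cO_Y(-E)$ has no weight-zero sections along $E$ --- handles $H^0$ but says nothing about the higher direct images, and the corresponding untwisted statement is genuinely false: the blow-up sequence of Proposition \ref{prop:ROS}(a) exhibits a surjection $H^1(Y,\cO_Y)\onto H^1_\cdh(E,\cO)\cong k^{g+l}$, and the target sits entirely in weight $0$ (any component of $E$ on which $\mathbb{G}_m$ acts nontrivially is rational, so the genus and loop contributions are $\mathbb{G}_m$-invariant); hence $H^1(Y,\cO_Y)_0\ne 0$ whenever $g+l>0$, a case allowed by the hypotheses of the lemma. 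So the vanishing of the weight-$0$ part of $H^1\bigl(Y,\Omega^p_Y(\log E)(-E)\bigr)$ is a delicate cancellation, not a formality; it needs a genuine argument (e.g.\ the theorem on formal functions together with a weight computation of $H^1$ of the graded pieces $\Omega^p_Y(\log E)\otimes I_E^{n}/I_E^{n+1}$ on $E$, where the torus acts nontrivially on the rational components). Until that is supplied, the weight-$0$ rows are not shown to be acyclic and the proof is incomplete; the rest of the argument is sound.
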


\begin{proof}
The cases $q=-1$, $q=0$, $q>0$ follow from \eqref{eq:FHC} using the 
exact sequences 
\begin{align}  
0\to \nil(R) \to & \tors\Omega^1_R \to \tors\Omega^2_R \to
\tors\Omega^3_R \to\cdots \\ 
0\to (R^+/R) \to & \Omega^1_\cdh(R)/\Omega^1_R \to
\Omega^2_\cdh(R)/\Omega^2_R \to \cdots \\
0\to \Hcdh^n(R,\cO) & \map{d} \Hcdh^n(R,\Omega^1) \map{d}
    \Hcdh^n(R,\Omega^2) \to \cdots, \qquad n>0,
\end{align}
respectively, which are established in \cite[Example 3.9]{nk}.
For $q<-1$ it follows from Goodwillie's Theorem \cite[9.9.1]{WeibelHA94}.
\end{proof}

A key property of $\chi^p$ is its invariance under deformations of the sort
described in the following theorem.  In it, we write $X_s$ for the fiber
of $X$ over a point $s \in S$.

\begin{thm}\label{thm:chi}
Suppose $X \to S$ is a flat local complete intersection map of affine
varieties with $S$ smooth and such that the singular locus $X_{\sing}$
of $X$ is finite and \'etale over $S$.  Suppose in addition that one
can find a projective map $\pi: Y \to X$ which is an isomorphism away
from $X_\sing$, such that $Y$ is smooth and
such that the reduced, irreducible components $E_1, \dots, E_m$ of
$Y \times_X X_{\sing}$ are smooth over $S$ and
satisfy the property that each
$$
E_{i_1, \dots, i_t} :=
E_{i_1} \times_Y E_{i_2} \times_Y\cdots\times_Y E_{i_t} \to S
$$
is smooth 
($1\!\le\! i_1,\dots, i_t\!\le\! m$).
Then $\chi^p(X_s)$ is independent of the closed point $s$.

Suppose in addition that a finite group $G$ acts on both $X$ and $Y$
and that $\pi$ and $X \to S$ are equivariant, where we declare the
action of $G$ on $S$ to be trivial. Assume that $X/G \to S$ is a flat
local complete intersection such that $(X/G)_{\sing}$ is finite and
\'etale over $S$.
Then $\chi^p(X_s/G)$ is independent of the closed point $s\in S$.
\end{thm}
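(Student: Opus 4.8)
The plan is to reduce the statement to a semicontinuity-plus-constancy argument for the Euler characteristic of the complex $C^p_{X_s}$, exploiting that the formation of $C^p_X$ is compatible with restriction to fibers in good situations. First I would observe that each hypercohomology module $\bbH^q(C^p_{X_s})$ is supported on the (finite) singular locus of $X_s$, so the Euler characteristic $\chi^p(X_s)$ can be computed from the stalk of $C^p_X$ at the points of $X_\sing$ lying over $s$. Using the description \eqref{eq:FHC} together with the resolution $Y \to X$ and its restriction $Y_s \to X_s$, the formula \eqref{E1} (or its two-dimensional refinement \eqref{eq:O(-E)}) expresses $\uOmega^p_{X_s}$ in terms of $\R\pi_* \Omega^p_{Y_s}$ and the pushforwards of $\Omega^p$ on the various intersections $E_{i_1,\dots,i_t}$; the hypotheses guarantee that each of these is flat over $S$ and that base change along $S' \to S$ commutes with forming these complexes. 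On the cotangent-complex side, $\ctL^p_{X/S}$ restricts to $\ctL^p_{X_s}$ because $X \to S$ is a flat lci map (so the relative cotangent complex behaves well and the base-change spectral sequence degenerates), and the Andr\'e--Quillen homology $D^{(p)}_n(X_s)$ is the restriction of a coherent sheaf on $X$ that is flat over $S$, since $HH^{(p)}$ commutes with flat base change and vanishes in high degrees uniformly for lci maps.

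Next I would assemble these pieces into a single bounded complex $\mathcal{C}^p$ of coherent $\cO_X$-modules, supported on $X_\sing$, whose derived restriction to each fiber $X_s$ computes $C^p_{X_s}$; concretely one takes the cone of $\ctL^p_{X/S} \to \uOmega^p_{X/S}$, where $\uOmega^p_{X/S} := \R\pi_*\Omega^p_{Y_\mathdot/S}$ is built from the relative differentials of the (relative) hyperresolution. Because $X_\sing \to S$ is finite and \'etale, the pushforward of $\mathcal{C}^p$ to $S$ is a bounded complex of coherent sheaves, and its restriction to a closed point $s$ has homology computing $b^{p,q}_{X_s}$ — here I would need that the relevant Tor-terms from base change vanish, which follows from the flatness statements collected above. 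The alternating sum of the lengths of the fiber homologies is then the rank (i.e. the generic value of the Euler characteristic of the stalks) of this complex over $S$, which is locally constant on the connected, hence irreducible since $S$ is smooth, components of $S$; this gives the asserted independence of $\chi^p(X_s)$ on the closed point $s$.

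For the equivariant statement, I would run the same argument on the quotient $X/G \to S$, using that $\pi$ descends to a projective map $\bar\pi : Y/G \to X/G$ (or more precisely that one can take $G$-invariants of the whole construction): in characteristic zero, taking $G$-invariants is exact, so $(C^p_{X_s})^G$ computes the du Bois-type complex of $X_s/G$, and one gets $\chi^p(X_s/G)$ as the Euler characteristic of the $G$-invariant part of the fiber homology of $\mathcal{C}^p$. Since taking invariants commutes with the base change to $s$ (again because $|G|$ is invertible), the invariant-part Euler characteristic is likewise a locally constant function of $s$, and the hypothesis that $(X/G)_\sing$ is finite and \'etale over $S$ ensures everything remains supported on a finite \'etale $S$-scheme so that the lengths are finite and add up correctly.

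The main obstacle I expect is the base-change compatibility of the du Bois complex: proving that $\R\pi_*\Omega^p_{Y_\mathdot/S}$, with $Y_\mathdot$ the relative hyperresolution, really does restrict on the fiber to a complex quasi-isomorphic to $\uOmega^p_{X_s}$. This is where the smoothness-over-$S$ hypotheses on $Y$ and on all the intersections $E_{i_1,\dots,i_t}$ are essential — they ensure $\Omega^p_{Y/S}|_{Y_s} = \Omega^p_{Y_s}$ and, crucially, that $Y_s \to X_s$ (with exceptional divisor $E_s$) is still a good resolution so that the description \eqref{E1} applies fiberwise — but verifying that the Tor-independence needed to commute $\R\pi_*$ with the derived restriction to $s$ holds will require some care, probably invoking that the $E_{i_1,\dots,i_t}$ are flat over $S$ and a local-cohomology or Grothendieck spectral sequence comparison. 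Once that compatibility is in hand, the cotangent-complex and Andr\'e--Quillen parts are comparatively routine consequences of flatness of the lci map $X \to S$, and the constancy of the Euler characteristic over a smooth (hence geometrically connected on components) base is standard.
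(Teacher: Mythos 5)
Your proposal is correct and follows essentially the same route as the paper: you build the relative complex $C^p_{X/S}$ as the cone of $\ctL^p_{X/S}\to\R\pi_*\Omega^p_{Y_\mathdot/S}$, show its derived restriction to a fiber computes $C^p_{X_s}$ via Tor-independence coming from the smoothness of the $Y_n$ over $S$, and conclude by constancy of the Euler characteristic over the smooth base, with $G$-invariants handling the equivariant case. The paper packages the last step as the statement that $j_s^*\colon K_0(S)\to\Z$ is independent of $s$ applied to the class $[C^p_{X/S}]$, and carries out the base change you flag as the main obstacle using \cite[IV.3.1]{SGA6} together with local freeness of $\Omega^p_{Y_n/S}$.
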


\begin{proof}
\addtocounter{equation}{-1}
\begin{subequations}

\noindent
In analogy with Definition \ref{def:CpX}, 
we use \eqref{E1} to define a relative version of $C^p$: 
$$
C^p_{X/S} :=
\Tot\left(
\ctL^p_{X/S} \to \R\pi_*\uOmega^p_{Y_0/S} \to
\R\pi_*\uOmega^p_{Y_1/S} \to \R\pi_*\uOmega^p_{Y_2/S} \to\cdots\right),
$$
where, as in \eqref{Yn-point}, 
\begin{equation}\label{Yn}
Y_n =\begin{cases} Y \amalg X_{\sing}& n=0\\
\coprod_{i_1<\cdots<i_n} E_{i_1,\dots,i_n}& n>0.
\end{cases}
\end{equation}
and $\ctL^p_{X/S}$ is the $p$-th cotangent complex for $X\to S$; the
map $\ctL^p_{X/S} \to \R\pi_*\Omega^p_{Y_0/S}$ is induced by
the composite of the natural maps
$\ctL^p_{X/S} \to \Omega^p_{X/S} \to \pi_* \Omega^p_{Y_0/S}$.

The complex $C^p_{X/S}$ is a complex of quasi-coherent $\cO_X$-modules
with only finitely many non-zero homology sheaves, each of which is coherent.
Moreover, each such homology sheaf is supported on the singular
locus of $X$, which maps finitely to $S$.
By restriction of scalars along the affine map $X \to S$, we may
therefore regard $C^p_{X/S}$ as a complex of quasi-coherent $\cO_S$-modules
whose homology is coherent.
As such, this complex determines a class $[C^p_{X/S}]$
in $G_0(S) = K_0(S)$. Explicitly, this class is
the alternating sum of these homology modules.

For any point $s \in S$,  let $j_s: s\to S$ 
be the induced map of schemes and let $j_s^*: K_0(S) \to K_0(s)\cong \Z$
be the pull-back map in $K$-theory. Note that for any $s$, the map $j_s^*$
sends the class of a locally free $\cO_S$-module to its rank.
Consequently, the map $j_s^*:K_0(S)\to\Z$ 
does not depend on the choice of $s \in S$.
We now prove that for any closed point $s\in S$:
\begin{equation}\label{eq:CpX/S}
j_s^*[C^p_{X/S}] = [C^p_{X_s/s}].
\end{equation}
Since the class $[C^p_{X_s/s}]$ in $K_0(s) = \Z$ is $\chi^p(X_s)$
when $s\in S$ is a closed point, 
this will prove the first assertion of the Theorem.

Note first of all that if $\cF^\bu$ is any complex of quasi-coherent
$\cO_S$-modules with bounded, coherent homology, then $j_s^*[\cF^\bu] =
[\bL j_s^* \cF^\bu]$, where $\bL j^*_s$ denotes the left derived functor
associated to $j_s^*$. For any $n$, let $\tilde{\pi}: Y_n \to S$
be the structure map, which we are supposing to be smooth and hence flat.
Thus $\tilde{\pi}$ and $j_s$ are Tor-independent over $S$.
Consider the pullback diagram
\[
\xymatrix{(Y_n)_s\ar[d]_q\ar[r]^{\alpha_s}& Y_n \ar[d]^{\tilde{\pi}}\\
            s\ar[r]^{j_s}& S.}
\]
By \cite[IV.3.1]{SGA6}, we have
$
\bL j_s^* \R \tilde{\pi}_* \Omega^p_{Y_n/S} \simeq
\R q_* \bL \alpha_s^* \Omega^p_{Y_n/S}.
$
Since $Y_n/S$ is smooth, 
$\Omega^p_{Y_n/S}$ is locally free and we have
$$
\bL\alpha_s^*\Omega^p_{Y_n/S} =
%\R q_* \bL \alpha_s^* \Omega^p_{Y_n/S}
%\simeq
\alpha_s^* \Omega^p_{Y_n/S} \cong \Omega^p_{(Y_n)_s /s}.
%\R q_* \alpha_s^* \Omega^p_{Y_n/S} \cong \R q_*\Omega^p_{(Y_n)_s /s}.
$$
Hence
\begin{equation}\label{eq:bchange}
\bL j_s^* \R \tilde{\pi}_* \Omega^p_{Y_n/S} \simeq
\R q_* \Omega^p_{(Y_n)_s /s}.
\end{equation}
Similarly, it is a standard property of the cotangent complex that
$$
j_s^* \ctL^p_{X/S} \simeq \bL j_s^* \ctL^p_{X/S} \simeq \ctL^p_{X_s/s}.
$$
Combining these, we get the formula 
$$
j_s^*[C^p_{X/S}]
=
\left[\cdots \to
\ctL^p_{X_s/s} \to \R q_* \Omega^p_{(Y_0)_s /s} \to
\R q_* \Omega^p_{(Y_1)_s /s} \to \cdots \right].
$$
Finally, if $s$ is a closed point then by \eqref{E1} we have
$$
\uOmega^p_{X_s} \simeq
\left(
\R q_* \Omega^p_{(Y_0)_s /s} \to
\R q_* \Omega^p_{(Y_1)_s /s} \to \cdots \right)
$$
and hence the formula $j_s^*[C^p_{X/S}] = [C^p_{X_s/s}]$
of \eqref{eq:CpX/S}, proving the first assertion.

Suppose now that a finite group $G$ acts on $X$ and $Y$ as
in the statement of the Theorem. Let $Y_n$ be as in \eqref{Yn} above;
then $G$ acts on $Y_n\to S$ and hence on $\uOmega^p_{Y_n/S}$ and
$\R \tilde{\pi}_*\uOmega^p_{Y_n/S}$ for all $n$. For each $s\in S$,
the group $G$ acts also on $\uOmega^p_{(Y_n)_s}$.

Since $G$ is a finite group and we are in characteristic $0$, taking
$G$-invariants is exact. This implies the key property we will need,
proven in \cite[5.12]{duBois}, namely that
$$
\uOmega^p_{(Y_n)_s/G} \simeq
(\uOmega^p_{(Y_n)_s})^G \simeq
(\Omega^p_{(Y_n)_s})^G.
$$
Since taking $G$-invariants also commutes with $\R q_*$,
this property implies that
\begin{equation}\label{eq:OmegaY/G}
\R q_*\bigl(\uOmega^p_{(Y_n)_s/G}\bigr)
\simeq
\R q_*\bigl((\Omega^p_{(Y_n)_s})^G\bigr)
\simeq
(\R q_*\Omega^p_{(Y_n)_s})^G.
\end{equation}
Define the analogue $D^p_{X/S}$ of $C^p_{X/S}$ by
$$
D^p_{X/S} =
\left(\ctL^p_{(X/G)/S} \to (\R\pi_*\Omega^p_{Y_0/S})^G  \to
    (\R\pi_*\Omega^p_{Y_1/S})^G \to \cdots \right).
%\left(\ctL^p_{(X/G)/S} \to (\R\pi_* \uOmega^p_{Y_0/S})^G  \to
%(\R\pi_* \uOmega^p_{Y_1/S})^G  \to \cdots \right).
$$
Now taking $G$-invariants commutes with $\bL j_s^*$.
Using \eqref{eq:bchange} and \eqref{eq:OmegaY/G}, we have
$$
\bL j_s^* ((\R \tilde{\pi}_* \Omega^p_{Y_i/S})^G)
%\bL j_s^* ((\R \tilde{\pi}_* \uOmega^p_{Y_i/S})^G)
\simeq
(\bL j_s^* (\R \tilde{\pi}_* \Omega^p_{Y_i/S}))^G
%(\bL j_s^* (\R \tilde{\pi}_* \uOmega^p_{Y_i/S}))^G
\simeq
(\R q_* \Omega^p_{(Y_i)_s})^G
%(\R q_* \uOmega^p_{(Y_i)_s})^G
\simeq
%\R q_* (\uOmega^p_{(Y_s/G)_i}).   %this one is underlined
\R q_* (\uOmega^p_{(Y_i)_s/G)}).
$$
Finally, observe that a similar argument as that used to prove
\eqref{E1} shows that
$$
\uOmega^p_{X_s/G} \simeq
\left(
\R q_* \uOmega^p_{((Y_0)_s/G)} \to
\R q_* \uOmega^p_{((Y_1)_s/G)} \to \cdots \right).
$$
Indeed, $X_s/G, Y_s/G$, and the $(E_i)_s/G$
satisfy the same hypotheses as do $X$, $Y$, and the $E_i$, except for
smoothness, so that the results in  \cite[4.8, 4.10, 4.11]{duBois} apply.
It follows that
$$
j_s^* [D^p_{X/S}] \simeq [C^p_{(X_s/G)}].
$$
Since the class of $[C^p_{X_s/G}]$ in $K_0(s)=\Z$ is $\chi^p(X_s/G)$,
it is independent of $s$. 
\end{subequations}
\end{proof}

\section{Isolated (hyper)surface singularities.}\label{sec:Wahl}

In this section we consider the du Bois invariants of a two-dimensional
isolated hypersurface singularity $X$.
That is, $X = \Spec R$ where $R = F[x,y,z]/(f(x,y,z))$ and
$\Omega^3_{R/F} \cong R/(\fx, \fy, \fz)$ is supported at the origin (\ie
the unique singular  point $x_0$ is defined by the maximal ideal $(x,y,z)$).
The analytic analogues of some of our results are due to Steenbrink
and may be found in Wahl's paper \cite{Wahl}.

We will need the following well known calculation of
$\Omega^p_R=\Omega^p_{R/F}$.
Recall that the {\em Tjurina number} $\tau$ is: 
\[
\tau = \len_R \left(R/{\textstyle \left(\fx, \fy, \fz\right)}\right).
\]

\begin{lem}\label{lem:lengths}
Let $X = \Spec R$ be a 2-dimensional isolated hypersurface singularity.
Then each of the following $R$-modules has length equal to
$\tau$:
$$
\Omega^3_R\cong \Ext^1_R(\Omega^1_R,R)\cong\Ext^2_R(\Omega^2_R, R),
\quad \tors(\Omega^2_R)\cong\Ext^1_R(\Omega^2_R, R).
$$
\end{lem}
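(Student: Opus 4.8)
The plan is to work with the minimal free resolution of $R$ as an $F[x,y,z]$-module and use the Jacobian description of $\Omega^3_R$. Write $P = F[x,y,z]$ and let $R = P/(f)$ with isolated singularity, so that the Koszul-type presentation
$$
P^3 \map{\begin{pmatrix}\fx & \fy & \fz\end{pmatrix}} P \lra R \lra 0
$$
together with the relation $f = x\fx/\!\sim$ (more precisely, $f$ is in the ideal generated by the partials up to the grading/Euler relation only in the quasi-homogeneous case, so in general one just uses the conormal sequence directly) gives the presentation of $\Omega^1_R$ via the conormal sequence
$$
(f)/(f)^2 \map{d} \Omega^1_P \otimes_P R \lra \Omega^1_R \lra 0,
$$
i.e.\ $\Omega^1_R = \coker\bigl( R \map{(\fx,\fy,\fz)^{\mathrm t}} R^3 \bigr)$. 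First I would identify $\Omega^3_R$: since $\Omega^3_R = \wedge^3\Omega^1_R$ and the top exterior power of the cokernel of $R \to R^3$ is $R/(\fx,\fy,\fz)$, we get $\Omega^3_R \cong R/(\fx,\fy,\fz)$, which has length $\tau$ by definition. That is the anchor computation.

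Next I would compute the $\Ext$ groups by dualizing the presentation $R \to R^3 \to \Omega^1_R \to 0$ into $R$. The key input is that $R$ is Gorenstein of dimension $2$, so $\Ext^i_R(M,R)$ behaves well and, for $M$ of finite length, $\Ext^i_R(M,R)$ vanishes except for $i = 2$, where it is Matlis dual to $M$; more generally $\Ext^i_R(M,R)$ for $i > \dim M$ recovers the ``codimension $\ge i$ part'' of $M$. Concretely: applying $\Hom_R(-,R)$ to $R \map{\phi} R^3 \to \Omega^1_R \to 0$ (with $\phi$ the column of partials) gives
$$
0 \to \Hom_R(\Omega^1_R,R) \to R^3 \map{\phi^{\mathrm t}} R \to \Ext^1_R(\Omega^1_R,R) \to 0,
$$
so $\Ext^1_R(\Omega^1_R,R) = \coker(\phi^{\mathrm t}) = R/(\fx,\fy,\fz)$, again of length $\tau$. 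For $\Omega^2_R = \wedge^2 \Omega^1_R$ I would use the presentation of $\wedge^2$ of a cokernel: from $R \map{\phi} R^3 \to \Omega^1_R \to 0$ one obtains a presentation $R^3 \to \wedge^2 R^3 = R^3 \to \Omega^2_R \to 0$ whose first map is (up to sign) the ``adjugate/cross-product'' map $v \mapsto \phi(v) \wedge (-)$, i.e.\ multiplication by the vector $(\fx,\fy,\fz)$ under the identification $\wedge^2 R^3 \cong R^3$. Dualizing this presentation and computing $\Ext^1$ and $\Ext^2$ then reduces, via the Gorenstein duality $\Ext^2_R(M,R) \cong \Hom_F(M,F)$ for finite-length $M$ (length-preserving) and the fact that $\tors\Omega^2_R$ is exactly the finite-length part, to the lengths of $R/(\fx,\fy,\fz)$ and its dual, both $\tau$. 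In particular $\tors(\Omega^2_R) \cong \Ext^1_R(\Omega^2_R,R)$ because $\Ext^1_R(-,R)$ annihilates the free part and, on the torsion part $T$ (which has codimension $2$), equals... wait: $\Ext^1_R(T,R) = 0$ for finite-length $T$ over a $2$-dimensional Gorenstein ring, so the correct statement must be that $\tors\Omega^2_R$ and $\Ext^1_R(\Omega^2_R,R)$ both have length $\tau$ and the isomorphism is via the finer structure — I would derive it from the exact sequence coming from $0 \to \tors\Omega^2_R \to \Omega^2_R \to \Omega^2_R/\tors \to 0$ with $\Omega^2_R/\tors$ reflexive hence $\Ext^1(-,R)$-acyclic, giving $\Ext^1_R(\Omega^2_R,R) \cong \Ext^1_R(\Omega^2_R/\tors, R)$ and separately relating this to $\tors\Omega^2_R$ through the second syzygy.

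The step I expect to be the main obstacle is pinning down the presentation matrices for $\Omega^2_R$ precisely enough — in particular getting the sign/adjugate combinatorics of $\wedge^2$ of a $1\times 3$ map right and verifying that the resulting $R$-free presentation is in fact a (truncated) minimal resolution so that the $\Ext$ computation is valid — and correctly invoking Gorenstein duality to convert finite-length modules and their Matlis duals into the single numerical invariant $\tau$ without losing track of which $\Ext^i$ is nonzero in which homological degree. Once the presentations are in hand, each length equals $\len_R\bigl(R/(\fx,\fy,\fz)\bigr) = \tau$ either directly as a cokernel or by Matlis duality, and the claimed isomorphisms follow by comparing the defining exact sequences. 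The two-dimensionality and the isolated-singularity (hence finite-length Jacobian) hypotheses are used exactly to guarantee $\Omega^3_R$ and $\tors\Omega^2_R$ are finite length and that the relevant higher $\Ext$'s vanish.
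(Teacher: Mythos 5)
Your skeleton is essentially the paper's: present $\Omega^p_R$ as the cokernel of $\wedge\, df\colon \Omega^{p-1}_P\otimes_P R\to\Omega^p_P\otimes_P R$ (where $P=F[x,y,z]$), dualize into $R$, and exploit the perfect pairing $\Omega^p_P\otimes_P\Omega^{3-p}_P\to\Omega^3_P$ to make the resulting Koszul-type complex $\cK$ self-dual. The anchor computation $\Omega^3_R\cong R/(\fx,\fy,\fz)$ and the identification $\Ext^1_R(\Omega^1_R,R)\cong R/(\fx,\fy,\fz)$ are fine as you set them up. But three genuine gaps remain, and one of your fallback arguments is actually false.

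First, to compute $\Ext^1$ and $\Ext^2$ of $\Omega^2_R$ you need the four-term sequence $0\to R\to\Omega^1_P\otimes_P R\to\Omega^2_P\otimes_P R\to\Omega^2_R\to0$ to be exact, i.e.\ exactness of $\cK$ in degree $1$. By Michler's identification the $n$-th cohomology of $\cK$ is $\tors\Omega^n_R$, so this amounts to $\tors\Omega^1_R=0$ --- a theorem of Lebelt for isolated singularities, not something extractable from the presentation matrices alone. You correctly flag this as the main obstacle but never discharge it. Second, the isomorphism $\tors(\Omega^2_R)\cong\Ext^1_R(\Omega^2_R,R)$ cannot be obtained the way you propose: $\Omega^2_R/\tors$ is torsion-free but not reflexive, and over the two-dimensional Gorenstein normal domain $R$ a reflexive module is maximal Cohen--Macaulay, so if your claim held then $\Ext^1_R(\Omega^2_R,R)\cong\Ext^1_R(\Omega^2_R/\tors,R)$ would vanish --- contradicting the very statement you are proving, since its length is $\tau>0$. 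The paper instead reads this isomorphism off from the self-duality $\Hom_R(\cK,R)[-3]\cong\cK$, which identifies $\Ext^1_R(\Omega^2_R,R)$ with $H^2(\cK)=\tors\Omega^2_R$ and $\Ext^2_R(\Omega^2_R,R)$ with $H^3(\cK)=\Omega^3_R$. Third, even granting that identification, the equality $\len_R(\tors\Omega^2_R)=\tau$ is not a formal consequence of Matlis duality applied to $R/(\fx,\fy,\fz)$; it is a separate theorem of Michler, which the paper cites. So your strategy is the right one, but it is missing the two external inputs (Lebelt and Michler) that make it work, and the reflexivity shortcut must be deleted.
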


\begin{proof}
Write $R=P/f$, where $P=F[x,y,z]$, and
consider the complex $\cK$ of free $R$-modules,
whose boundary maps are induced by exterior multiplication with $df$,
indexed with $R$ in degree~0: 
\[
\cK:\quad 0\to R\overset{\land
  df}\to\Omega^1_P\otimes_PR\overset{\land
  df}\to\Omega^2_P\otimes_PR\overset{\land
  df}\to\Omega^3_P\otimes_PR\to 0.
\]
By \cite[p.\ 326]{Mi94}, the $n$-th cohomology of the complex $\cK$ is
the torsion submodule of $\Omega^n_R$. In the isolated singularity
case considered here, it follows from Lebelt's results \cite{Lebelt}
(see also \cite[Prop.\ 1]{Mi95}) that $\Omega^n_R$ is a torsionfree
module for $n\le 1$. In particular, we have free resolutions:
\begin{gather}
0\to R\overset{\land df}\to\Omega^1_P\otimes_PR\to\Omega^1_R\to 0
\nonumber \\
0\to R\overset{\land df}\to\Omega^1_P\otimes_PR\overset{\land
  df}\to\Omega^2_P\otimes_PR\to \Omega^2_R\to 0 \nonumber
\end{gather}
Moreover the perfect pairing
$\Omega^p_P\otimes_P\Omega^{3-p}_P\to\Omega^3_P\cong P$
induces a perfect pairing $\cK^p\otimes_R \cK^{3-p}\to \cK^3\cong R$.
 From this we get an isomorphism of complexes
$\Hom_R(\cK,R)[-3]\cong \cK$. It follows that
\begin{gather}
\Ext^1_R(\Omega^1_R, R)=H^3(\cK)=\Omega^3_R \cong
R/{\textstyle \left(\fx, \fy, \fz\right)}, \nonumber \\
\Ext^1_R(\Omega^2_R,R)=H^2(\cK)=\tors\Omega^2_R. \nonumber
\end{gather}
By definition, the length of the first of these modules is $\tau$;
by \cite[Thm.\ 3]{Mi95}, the second module also has
length $\tau$.
\end{proof}

Recall the definition of the (generalized) du Bois invariants $b^{p,q}$ from
\eqref{eq:dBinvars}.

\begin{prop}\label{prop:b-bounds}
Let $X = \Spec R$ be a 2-dimensional isolated hypersurface singularity.
Then the following hold:
\begin{enumerate}
\item[(a)] $b^{p,q} = 0$ unless $p+q \in \{1,2\}$.
\item[(b)] $b^{1-q,q}=b^{2-q,q}=\tau$ for all $q<0$.
\item[(c)] $b^{0,2} = 0$, \textrm{ and }$b^{0,1}=-\chi^0$.
\end{enumerate}
\end{prop}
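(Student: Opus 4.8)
The plan is to read off everything from the structural description of $C^p_X$ in \eqref{eq:FHC} together with the simplified forms of $\uOmega^p_X$ in \eqref{eq:O(-E)} and the free resolutions of Lemma \ref{lem:lengths}.

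For part (b), I would use \eqref{eq:FHC}: for $q \le -2$ we have $\bbH^q(C^p_X) = D^{(p)}_{-1-q}(X) \cong HH^{(p)}_{p-1-q}(R)$. So $b^{p,q}$ for $q<0$ is governed by the Hodge pieces of Hochschild homology of the hypersurface $R=P/f$. Here I would invoke the standard computation for hypersurfaces: writing $n = -1-q \ge 1$, the groups $HH^{(p)}_{p+n}(R)$ are computed by the Koszul-type complex $\cK$ of Lemma \ref{lem:lengths} (this is exactly the content of the cited results of \cite{Mi94}, \cite{Mi95}, \cite{Lebelt}), and for an \emph{isolated} hypersurface singularity of dimension $2$ one gets that $D^{(p)}_n(R)$ is, for each $n \ge 1$, either $0$ or isomorphic to $\Omega^3_R$ (length $\tau$) or $\tors\,\Omega^2_R$ (length $\tau$), depending on the parity/range of $p$ relative to $n$. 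The precise bookkeeping — that $b^{1-q,q}$ and $b^{2-q,q}$ are the two that survive and both equal $\tau$ — follows from the periodicity of the Koszul complex $\Hom_R(\cK,R)[-3]\cong\cK$ established in the proof of Lemma \ref{lem:lengths}. This also proves the "unless $p+q\in\{1,2\}$" half of (a) in the range $q<0$.

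For the range $q \ge 0$, part (a) reduces to showing $\bbH^q(C^p_X)=0$ whenever $p+q \notin \{1,2\}$ with $q\ge 0$, \ie\ $\bbH^q(X,\uOmega^p)=0$ for $q\ge 1$ and $p\ne 2-q,\,1-q$ (and the $q=0$ coker/ker statements). Since $\dim X = 2$ and $\uOmega^p_X$ is concentrated in the right way, this is a dimension count on the good resolution $\pi:Y\to X$: by \eqref{eq:O(-E)}, $\uOmega^p_X \simeq \R\pi_*(\Omega^p_Y(\log E)(-E))$ for $p>0$, a complex of coherent sheaves on a surface, so its hypercohomology vanishes outside degrees $0,1,2$; and one checks that for $p\ge 3$ the sheaf $\Omega^p_Y(\log E)(-E)=0$ on the surface $Y$ (since $\dim Y=2$), killing $b^{p,q}$ for $p\ge 3$, while $\bbH^2$ contributes only in the boundary cases by Grothendieck–Serre duality / the fact that $E$ is connected. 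For $q=0$ the relevant cokernel $\coker(\Omega^p_X\to\bbH^0(X,\uOmega^p))$ and kernel are supported at the isolated point and vanish for $p$ outside $\{1,2\}$ by Lebelt's torsionfreeness ($\Omega^p_R$ torsionfree for $p\le 1$) and the hypersurface structure.

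For part (c): $b^{0,2}=\bbH^2(X,\uOmega^0)=h^2(\cO_Y(-E))$ by the example following \eqref{eq:dBinvars} and \eqref{eq:O(-E)}; this vanishes because $\cO_Y(-E)$ has no $H^2$ on the affine-over-$X$ surface $Y$ (equivalently, by the exact sequence relating $\cO_Y(-E)$, $\cO_Y$, $\cO_E$ and the fact that $Y\to X$ with $X$ affine gives $H^2(\cO_Y)=0$, while $H^2(\cO_E)=0$ since $E$ is a curve — actually one argues $H^1(\cO_E)\to H^2(\cO_Y(-E))\to H^2(\cO_Y)=0$ and $H^1(\cO_E)$ surjects into $H^1(\cO_Y)$ appropriately). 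Then, given (a), $C^0_X$ has nonzero cohomology only in degrees $q=0,1$ (note $b^{0,q}=0$ for $q<0$ since $\ctL^0=\cO_X$ and the example shows $b^{0,0}=\len(R^+/R)$, $b^{0,q}=h^q(\cO_Y(-E))$ for $q>0$, all of which vanish below degree $0$), and with $b^{0,2}=0$ we get $\chi^0 = b^{0,0} - b^{0,1} + b^{0,2} = b^{0,0} - b^{0,1}$. But $R$ is a normal (even a hypersurface isolated singularity in dimension $\ge 2$ is normal by Serre) domain, so $R^+ = R$ and $b^{0,0}=0$; hence $\chi^0 = -b^{0,1}$, \ie\ $b^{0,1} = -\chi^0$.

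I expect the main obstacle to be part (a) in the non-negative range: pinning down exactly which $\bbH^q(X,\uOmega^p)$ vanish requires combining the surface dimension bound, the log-differential description \eqref{eq:O(-E)}, and a duality/connectedness input about the exceptional divisor $E$, and making sure the $q=0$ cokernel and kernel terms are handled correctly via Lebelt's torsionfreeness results rather than swept under the rug. The $q<0$ part and (c) are then essentially bookkeeping on top of Lemma \ref{lem:lengths} and the standard Hochschild homology computation for hypersurfaces.
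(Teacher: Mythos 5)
Your overall strategy is the same as the paper's: read everything off from \eqref{eq:FHC}, use Michler's identification of the Hodge pieces of Hochschild homology with torsion in the $\Omega^n_R$ for the negative range, Lebelt's torsionfreeness for the $q=-1,0$ vanishing, and resolution-of-singularities vanishing for $q\ge 1$. However, there are two concrete gaps.

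First, your treatment of part (a) in the range $q\ge 1$ does not actually dispose of the case $(p,q)=(2,1)$. Your argument handles $q\ge 3$ (trivially), $p\ge 3$ (the sheaf vanishes on a surface), and the $\bbH^2$ terms (which indeed die because $\pi$ has fibers of dimension $\le 1$ and $X$ is affine, so $H^2(Y,\cF)=0$ for coherent $\cF$). But $b^{2,1}=\len\bbH^1(X,\uOmega^2)=\len H^1(Y,\Omega^2_Y)$ is an $\bbH^1$ term with $p+q=3$, and its vanishing is \emph{not} a dimension count: it is exactly Grauert--Riemenschneider vanishing $R^1\pi_*\Omega^2_Y=0$ (compare the paper's Proposition \ref{prop:ROS}(b), and note that $R^1\pi_*\Omega^1_Y$ is generally nonzero, so no soft argument can work here). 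Your appeal to ``Grothendieck--Serre duality / connectedness of $E$'' is aimed at the $\bbH^2$ terms and does not supply this input; the paper's proof for $q>0$ runs precisely through Grauert--Riemenschneider plus the vanishing $H^2_\cdh(X,\Omega^p)=0$ for affine surfaces.

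Second, part (b) asserts $b^{1-q,q}=b^{2-q,q}=\tau$ for \emph{all} $q<0$, but your argument only covers $q\le -2$, where \eqref{eq:FHC} gives Andr\'e--Quillen homology. At $q=-1$ the relevant group is $\ker\bigl(\Omega^p_R\to \bbH^0(X,\uOmega^p)\bigr)$, and you need the positive identification of this kernel with $\tors(\Omega^p_R)$ (the paper cites \cite[Lemma 5.6 and Remark 5.6.1]{nk} for this), not merely the torsionfreeness used for vanishing; then $b^{2,-1}=\len\tors\Omega^2_R=\tau$ and $b^{3,-1}=\len\Omega^3_R=\tau$ by Lemma \ref{lem:lengths}. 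Relatedly, your statement that the $q=0$ cokernel and $q=-1$ kernel ``vanish for $p$ outside $\{1,2\}$'' has the wrong range for the kernel term: since it sits in cohomological degree $-1$, it must vanish for $p$ outside $\{2,3\}$, which is what torsionfreeness of $R$ and $\Omega^1_R$ together with $\Omega^p_R=0$ for $p>3$ actually give. These are repairable with the citations the paper uses, but as written the proposal does not prove the full statement.
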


\begin{proof}
To prove (a), note that 
for $q>0$, it is a particular case of a general statement for
isolated singularities 
proved by Steenbrink in \cite[Thm.\ 1]{Steenbrink}, since
$b^{p,q}$ is the length of $\bbH^q(X,\uOmega^p_X)$ by \eqref{eq:FHC}.
In our case Steenbrink's result is immediate from Grauert-Riemenschneider
vanishing \cite[Satz 2.3]{GrauRiem} and from the fact, proved in
\cite[Prop.\ 2.6]{chw}, that for an affine surface $X$,
\begin{equation}\label{h2vanish}
H^2_\cdh(X,\Omega^p)=0\qquad (p\ge 0).
\end{equation}
If $q = 0$ and $p > 2$, (a) holds since then $a^*\Omega^p =0$.
If $q = p = 0$, it holds since $R$ is normal, hence seminormal.
For $q = -1$, (a) holds because  $\Omega^p_R = 0$ for $p > 3$ and 
$R$ and $\Omega^1_R$ are torsionfree; see
\cite[Lemma 5.6 and Remark 5.6.1]{nk}. 
For $q\le -2$,
we have
\begin{equation} \label{Michlertor}
H_q(C^p_X)=D^{(p)}_{-1-q}(R)=HH_{p-q-1}^{(p)}(R)=\tors(\Omega^{p+q+1}_R)
\end{equation}
which is zero unless $p+q\in\{1,2\}$, by a result of Michler \cite{Mi94}.

Assertion (b)
follows from \eqref{Michlertor} and the fact that the kernel of
  $\Omega^n_R \to H^0_\cdh(X, \Omega^n)$ is $\tors(\Omega^n_R)$
  (see \cite[Lemma 5.6 and Remark 5.6.1]{nk}), using \cite[Thm.\ 3]{Mi95} (see Lemma
  \ref{lem:lengths}). 

For assertion (c),
the vanishing of $b^{0,2}$ is a particular case of \eqref{h2vanish}. 
The other assertion 
follows from part (a) and the definition (see \ref{def:chi}) of $\chi^0$.
\end{proof}

\begin{prop} \label{prop:ROS}
Let $X = \Spec R$ be a 2-dimensional isolated hypersurface singularity.
Further let $\pi: Y\to X$ be a good resolution, $E$ the exceptional divisor,
$E_1,\dots,E_r$ its reduced irreducible components, $g_i$ the genus of $E_i$,
and $l$ the number of loops in the incidence graph. Put $g=\sum_ig_i$
and $p_g=\len_R H^1(Y,\cO_Y)$.

\begin{enumerate}
\item[(a)] The map $H^n_{\cdh}(X,\cO)\to H^n_\cdh(Y,\cO)=H^n(Y,\cO)$ is
an isomorphism for $n\ne1$, and an injection for $n=1$. We have
\[
b^{0,1}= p_g-g-l.
\]
In particular $H^1_\cdh(X,\cO)\to H^1(Y,\cO)$ is an isomorphism $\iff$ $g=l=0$.

\item[(b)]
$H^n_\cdh(X, \Omega^2) \cong H^n(Y, \Omega^2)$ for $n\!\ge\!0$.
In particular, $H^n_\cdh(X,\Omega^2)=0$ for $n\!\ge\!1$.

\item[(c)] 
$\Ext^i_R( H^0(Y,\Omega^2),R) \cong H^i(Y,\cO_Y).$
In particular, $Ext^2_R(H^0(Y,\Omega^2),R)=0$.

\item[(d)] $b^{1,0}\le\tau$. 

\item[(e)] 
$b^{2,0} = \tau - p_g$, and $\chi^2=-p_g$.
\end{enumerate}
\end{prop}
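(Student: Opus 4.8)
The plan is to obtain (a), (b), (c) directly from the du~Bois description of $cdh$-cohomology in \eqref{eq:O(-E)} together with Lemma~\ref{lem:duBois=cdh}, and then to deduce (d), (e) by rewriting $b^{1,0}$ and $b^{2,0}$ as colengths via \eqref{eq:FHC} and running an $\Ext$-duality argument; throughout we use that $R$ is $2$-dimensional, normal (a hypersurface with isolated singularity) and Gorenstein (a hypersurface), so $\omega_R\cong R$. For (a) I would push the du~Bois triangle $\uOmega^0_X\to\R\pi_*\cO_Y\oplus\cO_x\to\R\pi_*\uOmega^0_E$ of \cite[4.8, 4.11]{duBois} into Zariski hypercohomology. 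Since $E$ is a reduced normal crossings curve it has du~Bois singularities, so $\uOmega^0_E\simeq\cO_E$ (cf.\ \cite[4.10]{duBois}) and the third term contributes $H^*(E,\cO_E)$, while the middle one contributes $H^*(Y,\cO_Y)$ together with a copy of the residue field $F$ in degree $0$. As $X$ is affine and the fibres of $\pi$ have dimension $\le1$, $H^n(Y,\cO_Y)=\Gamma(X,R^n\pi_*\cO_Y)=0=H^n(E,\cO_E)$ for $n\ge2$, and $H^2_\cdh(X,\cO)=0$ by \eqref{h2vanish}; so the long exact sequence collapses. In degree $0$ it gives $\Hcdh^0(X,\cO)\cong R\cong H^0(Y,\cO_Y)$ (the normal ring $R$ being seminormal); in degree $1$, the map $\cO_x\to\R\pi_*\uOmega^0_E$ is an isomorphism on $H^0$, so the connecting map out of $H^0(E,\cO_E)$ vanishes and we get $0\to\Hcdh^1(X,\cO)\to H^1(Y,\cO_Y)\to H^1(E,\cO_E)\to0$; and both groups vanish for $n\ge2$. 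This yields the stated injectivity and isomorphisms, and since $b^{0,1}=\len\Hcdh^1(X,\cO)$ by \eqref{eq:FHC} while the conductor (normalization) sequence $0\to\cO_E\to\bigoplus_i\cO_{E_i}\to\cS\to0$ with $\cS$ supported at the nodes, plus an Euler-characteristic count, give $\len H^1(E,\cO_E)=g+l$, we conclude $b^{0,1}=p_g-g-l$, which vanishes iff $g=l=0$.

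For (b): by \eqref{eq:O(-E)}, $\uOmega^2_X\simeq\R\pi_*(\Omega^2_Y(\log E)(-E))$, and on the smooth surface $Y$ one has $\Omega^2_Y(\log E)\cong\Omega^2_Y(E)$, so $\Omega^2_Y(\log E)(-E)\cong\Omega^2_Y=\omega_Y$; hence $\Hcdh^n(X,\Omega^2)\cong H^n(Y,\Omega^2_Y)$, which vanishes for $n\ge2$ by dimension and for $n=1$ by Grauert--Riemenschneider \cite[Satz 2.3]{GrauRiem}. For (c) I would apply Grothendieck duality to the projective morphism $\pi$ over the affine Gorenstein scheme $X$: since $\cO_X$ is, up to shift, a dualizing complex and $\pi^{!}\cO_X\simeq\omega_Y=\Omega^2_Y$, duality with $\cF=\cO_Y$ reads $\R\pi_*\Omega^2_Y\simeq\R\Hom_R(\R\pi_*\cO_Y,R)$ in the derived category of $R$. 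By (b)/Grauert--Riemenschneider the left-hand side is the single module $\R\pi_*\Omega^2_Y=\pi_*\Omega^2_Y=H^0(Y,\Omega^2_Y)$ ($X$ affine), so $H^0(Y,\Omega^2)\simeq\R\Hom_R(\R\pi_*\cO_Y,R)$; applying $\R\Hom_R(-,R)$ once more and using biduality over the Gorenstein ring $R$ gives $\R\Hom_R(H^0(Y,\Omega^2),R)\simeq\R\pi_*\cO_Y$, that is $\Ext^i_R(H^0(Y,\Omega^2),R)\cong R^i\pi_*\cO_Y=H^i(Y,\cO_Y)$; in particular $\Ext^2=H^2(Y,\cO_Y)=0$.

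For (d) and (e) the algebraic input I would use is that, over the $2$-dimensional Gorenstein ring $R$, a finite-length module $Q$ has $\Hom_R(Q,R)=\Ext^1_R(Q,R)=0$ and $\len\Ext^2_R(Q,R)=\len Q$, while a reflexive $R$-module of full support is maximal Cohen--Macaulay and hence has no higher $\Ext$ into $R$. For (d): by \eqref{eq:FHC}, $b^{1,0}=\len\coker(\Omega^1_R\to\Hcdh^0(X,\Omega^1))$; this map is injective because $\Omega^1_R$ is torsionfree (cf.\ the proof of Lemma~\ref{lem:lengths}) and its target is torsionfree and agrees with $\Omega^1_R$ on the smooth locus of $X$, a codimension-$2$ complement, whence $\Omega^1_R\subseteq\Hcdh^0(X,\Omega^1)\subseteq(\Omega^1_R)^{**}$. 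Dualizing $0\to\Omega^1_R\to(\Omega^1_R)^{**}\to Q\to0$ into $R$ collapses the long exact sequence to $\Ext^1_R(\Omega^1_R,R)\cong\Ext^2_R(Q,R)$, and $\Ext^1_R(\Omega^1_R,R)\cong\Omega^3_R$ has length $\tau$ by Lemma~\ref{lem:lengths}; hence $b^{1,0}\le\len Q=\tau$. For (e): by (b), $\Hcdh^0(X,\Omega^2)=H^0(Y,\Omega^2_Y)=\pi_*\omega_Y$, so $b^{2,0}=\len\coker(\Omega^2_R\to\pi_*\omega_Y)$, and this map factors as $\Omega^2_R\onto\Omega^2_R/\tors\Omega^2_R\hookrightarrow\pi_*\omega_Y\hookrightarrow(\Omega^2_R)^{**}=\omega_R$ (all reflexive hulls computed on the smooth locus, where everything is the canonical bundle). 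The argument of (d), now with $\Ext^1_R(\Omega^2_R,R)\cong\tors\Omega^2_R$ of length $\tau$ (Lemma~\ref{lem:lengths}), gives $\len\coker(\Omega^2_R\to\omega_R)=\tau$; and dualizing $0\to\pi_*\omega_Y\to\omega_R\to T\to0$ gives $\len T=\len\Ext^1_R(\pi_*\omega_Y,R)=\len H^1(Y,\cO_Y)=p_g$ by (c). Additivity of length along $\Omega^2_R\to\pi_*\omega_Y\hookrightarrow\omega_R$ then yields $b^{2,0}=\tau-p_g$. Finally, by Proposition~\ref{prop:b-bounds}(a) only $b^{2,0}$ and $b^{2,-1}$ among the $b^{2,q}$ are nonzero and $b^{2,-1}=\tau$ by Proposition~\ref{prop:b-bounds}(b), so $\chi^2=b^{2,0}-b^{2,-1}=-p_g$.

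The step I expect to be the main obstacle is the bookkeeping in (c)--(e): pinning down the shift and the dualizing complex in Grothendieck duality, verifying that $\pi_*\omega_Y$ and the various reflexive hulls really sit in the claimed inclusions (so that the length additivity is legitimate), and tracking lengths carefully through the several $\Ext$ long exact sequences. The geometric inputs --- the du~Bois triangle, Grauert--Riemenschneider vanishing, and the vanishing $R^q\pi_*\cF=0$ for $q\ge2$ --- are standard.
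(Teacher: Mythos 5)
Your argument is correct, and for parts (a), (b), (c) and (e) it is essentially the paper's proof in light disguise: the paper runs the $cdh$ Mayer--Vietoris sequence of the blowup square where you use the du~Bois triangle (equivalent via Lemma~\ref{lem:duBois=cdh}), and in (c) the paper applies Grothendieck--Serre duality with $\cF^\bu=\Omega^2_Y$ to get the spectral sequence \eqref{gsss} directly, where you take $\cF=\cO_Y$ and then invoke biduality over the Gorenstein ring $R$ --- the same computation read backwards. The genuine divergence is in (d). The paper dualizes $0\to\Omega^1_R\to H^0_\cdh(X,\Omega^1)\to L\to 0$ and must kill $\Ext^2_R(H^0_\cdh(X,\Omega^1),R)$, which it does by running \eqref{gsss} at $p=1$ (using that $\Hom_R(H^1(Y,\Omega^1),R)=0$ and $H^2(Y,\Omega^1)=0$) together with the injection $H^0_\cdh(X,\Omega^1)\hookrightarrow H^0(Y,\Omega^1)$. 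You instead sandwich $\Omega^1_R\subseteq H^0_\cdh(X,\Omega^1)\subseteq(\Omega^1_R)^{**}$ and compute $\len\bigl((\Omega^1_R)^{**}/\Omega^1_R\bigr)=\tau$ outright, using that reflexive modules over the $2$-dimensional Gorenstein local ring are maximal Cohen--Macaulay; this avoids the $p=1$ spectral sequence entirely and is arguably cleaner, and your (e) then reuses the same reflexive-hull bookkeeping (with $\omega_R=(\Omega^2_R)^{**}\cong R$) where the paper manipulates the four-term sequence \eqref{seqmn}. Two small points you should make explicit: the torsion-freeness of $H^0_\cdh(X,\Omega^1)$, which you assert but which really comes from the injection into $H^0(Y,\Omega^1)$ (the same input the paper uses), and the identification $j_*\Omega^1_U\cong(\Omega^1_R)^{**}$ for $j:U=X\setminus\{x_0\}\hookrightarrow X$, which needs normality of $R$ and codimension $\ge 2$ of the complement; with those supplied, the length additivity along $\Omega^1_R\subseteq H^0_\cdh(X,\Omega^1)\subseteq(\Omega^1_R)^{**}$ and along $\Omega^2_R\to\pi_*\omega_Y\hookrightarrow\omega_R$ is legitimate and the conclusions $b^{1,0}\le\tau$ and $b^{2,0}=\tau-p_g$ follow as you say.
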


\begin{proof}
To prove (a), observe that $R$ is normal and $Y\to X$ is projective,
so that $R=H^0_\cdh(X,\cO)=H^0(Y,\cO)$ by Zariski's Main Theorem
(and  \cite[Proposition 2.5]{nk}). Since $Y\to X$ has fibers of
dimension at most $1$, and $X$ is affine,
\begin{equation}\label{fibers}
H^2(Y,\cF)= H^0(X,\R^2\pi_*\cF) = 0
\end{equation}
for all coherent sheaves $\cF$. In particular,
$H^2(Y,\cO)=0$. Similarly, $H^2_\cdh(X,\cO)=0$ by
\cite[Theorem 6.1]{chsw}. 
Since $\Sing X=\{x_0\}$, we have a blowup square 
\begin{equation}\label{goodres}
\xymatrix{E\ar[d]\ar[r]&Y\ar[d]\\ x_0\ar[r]&X} 
\end{equation}
 From the Mayer-Vietoris sequence associated to this square,
we extract the short exact sequence
\[
0\to H^1_\cdh(X,\cO)\to H^1(Y,\cO)\to H^1_\cdh(E,\cO)\to 0.
\]
Hence $b^{0,1}=\len_R H^1(Y,\cO)-\len_RH^1_\cdh(E,\cO)$. Applying
descent to the cover $\coprod_iE_i\to E$, we obtain $\len_R
H^1_\cdh(E,\cO)=l+g$.

For (b), the isomorphisms $H^n_\cdh(X,\Omega^2)\cong H^n(Y,\Omega^2)$
follow from the Mayer-Vietoris sequence associated to the square
\eqref{goodres}. By Grauert-Riemenschneider vanishing
\cite[Satz 2.3]{GrauRiem}, $\R\pi_*\Omega^2_Y \simeq \pi_*\Omega^2_Y$,
so $H^n(Y,\Omega^2)=H^0(X,R^n\pi_*\Omega^2_Y)$ vanishes for $n>0$
because $X$ is affine.

To prove (c), recall that $\omega_X\cong\cO_X[2]$ because
$X$ is an affine hypersurface. 
For any bounded complex of quasi-coherent sheaves $\cF^\bu$ on $Y$,
Grothendieck-Serre duality gives a quasi-isomorphism:
$$
\R \pi_* \R \Hom_Y(\cF^\bu, \Omega^2_Y)  \simeq
\R \Hom_X(\R \pi_* \cF^\bu, \cO_X)
$$
Taking $\cF^\bu = \Omega^p_Y$ and using the duality pairing on $Y$,
$$
\R \Hom_Y(\Omega^p_Y, \Omega^2_Y) \simeq
\Hom_Y(\Omega^p_Y, \Omega^2_Y) \cong \Omega^{2-p}_Y,
$$
we get a spectral sequence
\begin{equation}\label{gsss}
\Ext^i_R(H^j(Y,\Omega^p),R) \Rightarrow H^{i-j}(Y, \Omega^{2-p}).
\end{equation}
Taking $p = 2$
and using Grauert-Riemenschneider vanishing \cite[Satz 2.3]{GrauRiem},
which gives $H^j(Y,\Omega^2)=0$ for $j>0$, 
we obtain the conclusion of (c):
$$
\Ext^i_R( H^0(Y,\Omega^2),R) \cong H^i(Y,\cO_Y).
$$
In particular, by \eqref{fibers},
$
\Ext^2_R( H^0(Y,\Omega^2),R) = 0.
$

To prove (d), recall that $b^{1,0}$ is the length of the
$R$-module $L=\bbH^0(C^1_X)$. 
Since $b^{1,-1}=0$ by Proposition
\ref{prop:b-bounds}, it follows from \eqref{eq:FHC} that
we have an exact sequence
\begin{equation}\label{moduleL}
0 \to \Omega^1_R \to H^0_\cdh(X, \Omega^1) \to L\to 0.
\end{equation}
From \eqref{moduleL} we get the exact sequence
\begin{equation}\label{extseq}
\Ext^1_R(\Omega^1_R, R) \to
 \Ext_R^2(L, R)  \to \Ext^2_R(H^0_\cdh(X, \Omega^1), R).
\end{equation}
 From the spectral sequence \eqref{gsss} with $p=1$, we have an
exact sequence 
$$
\Hom_R(H^1(Y,\Omega^1),R) \map{d_2} \Ext^2_R(H^0(Y,\Omega^1),R)
\to H^2(Y,\Omega^1).
$$
Since the $R$-module $H^1(Y,\Omega^1)$ is supported at $x_0$,
$\Hom_R(H^1(Y,\Omega^1),R)=0$. The right side also vanishes,
by \eqref{fibers}, so we get $\Ext^2_R(H^0(Y,\Omega^1),R) = 0$.

By part (a), the map $H^0_\cdh(X,\Omega^1) \to H^0(Y,\Omega^1)$ is
injective, so the map
$$
\Ext^2_R(H^0(Y,\Omega^1), R) \to \Ext^2_R(H^0_\cdh(X,\Omega^1), R)
$$
is surjective and hence
$$
\Ext^2_R(H^0_\cdh(X, \Omega^1), R) = 0.
$$
 From \eqref{extseq} we get that $\Ext^1_R(\Omega^1_R,R)\to\Ext_R^2(L, R)$
is surjective and hence
\begin{align*}
b^{1,0}=\len_R(L)=&~\len_R(\Ext_R^2(L, R))\\
  \leq& ~\len_R(\Ext^1_R(\Omega^1_R,R))\\
  =& ~\tau, \text{ by Lemma \ref{lem:lengths}}.
\end{align*}

To prove (e), define finite length $R$-modules $N$ and $M$ so that
\begin{equation}\label{seqmn}
0 \to N \to \Omega^2_R \to H^0(Y, \Omega^2) \to M \to 0
\end{equation}
is exact. By definition \eqref{eq:dBinvars} and the fact that
$R$ is Gorenstein, we get
\begin{equation}\label{b20=ext2}
b^{2,0} = \len_R(M)=\len_R(\Ext^2_R(M,R)).
\end{equation}
Because $N$ has finite length, $\Ext^i(N,R)=0$ for $i<2$ and hence
there are isomorphisms 
\[
\Ext^i(\Omega^2_R/N,R)\overset\simeq\to \Ext^{i}(\Omega^2_R,R)\qquad (i<2).
\]
Using this together with part (c) and \eqref{seqmn},
we get an exact sequence
$$
0 \to H^1(Y, \cO_Y) \to \Ext^1_R(\Omega^2_R, R) \to \Ext^2_R(M, R) \to
0.
$$
Using this sequence, and taking into account  
Lemma \ref{lem:lengths} and \eqref{b20=ext2}, we get
$$
b^{2,0} = \tau - \len_RH^1(Y,\cO) = \tau - p_g.
$$
By \ref{prop:b-bounds}(a,b), this yields $\chi^2=b^{2,0}-\tau=-p_g$.
\end{proof}

\section{Wahl's example.}\label{sec:CE}

Using the general results of the preceding sections, we can now prove:

\begin{thm}\label{thm:dBcompute}
Let $F$ be a field of characteristic $0$ and
$$
R = F[x,y,z]/(z^2 + y^3 + x^{10} + ax^7 y),
$$
for any nonzero $a \in F$. Then $b^{0,1}=1$ and $b^{1,1}=0$. That is,
\begin{enumerate}
\item[(a)] $H^1_\cdh(R,\cO)\cong F$ and
\item[(b)] $H^1_\cdh(R,\Omega^1_{/F}) = 0$.
\end{enumerate}
\noindent 
In particular, if $F$ is an algebraic field extension of $\Q$, then
$R$ gives a negative answer to Bass' question for $n=0$:
\[
K_0(R)=K_0(R[t]) \textrm{ but } K_0(R[t_1,t_2])\cong K_0(R)\oplus stF[s,t].
\]
\end{thm}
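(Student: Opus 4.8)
The plan is to reduce everything to the computation of two generalized du Bois invariants, $b^{0,1}$ and $b^{1,1}$, for the specific surface $R$, and then invoke the machinery already assembled. First I would use Proposition \ref{prop:Wahl} (the deformation result, applied via Theorem \ref{thm:chi}) to replace $R = R_a$ by the weighted-homogeneous ring $R_0 = F[x,y,z]/(z^2+y^3+x^{10})$: since the singular locus of the total family $F[x,y,z,a]/(z^2+y^3+x^{10}+ax^7y)$ over $\A^1_a = \Spec F[a]$ is finite and \'etale over the base, and a simultaneous resolution exists, $\chi^p(R_a) = \chi^p(R_0)$ for all $p$. The point $a=0$ is graded, so by Lemma \ref{b-dR} we have $\sum_p (-1)^p b^{p,q}_{R_0} = 0$ for each $q$; combined with Proposition \ref{prop:b-bounds}(a), which confines the nonzero $b^{p,q}$ to $p+q \in \{1,2\}$, this turns the computation of $\chi^0$ and $\chi^1$ (equivalently $b^{0,1}$ and $b^{1,1}$) into a finite bookkeeping problem once the resolution invariants of the $E_8$-type singularity $z^2+y^3+x^{10}$ are known.

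Next I would carry out that bookkeeping. By Proposition \ref{prop:ROS}, everything is governed by the geometric genus $p_g = \len_R H^1(Y,\cO_Y)$, the Tjurina number $\tau$, the total genus $g = \sum g_i$ of the exceptional components, and the number of loops $l$ in the resolution graph; for a weighted-homogeneous surface singularity the minimal good resolution is classical (Orlik--Wagreich) and one reads off that the dual graph is a tree with rational components, so $g = l = 0$. Then Proposition \ref{prop:ROS}(a) gives $b^{0,1} = p_g$. For $z^2 + y^3 + x^{10}$, weights $(\tfrac1{10}, \tfrac13, \tfrac12)$ on $(x,y,z)$, one computes $p_g$ as the number of monomials $x^iy^jz^k$ (with $k \le 1$, $j \le 2$) whose weighted degree is strictly less than $1$ minus the sum of weights; this count is $1$, the unique offending monomial being $x^3 y$ (weight $3/10 + 1/3 = 19/30 < 1 - 1/10 - 1/3 - 1/2 = ...$, to be checked carefully). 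Hence $b^{0,1} = p_g = 1$, and then $b^{1,1}$ is forced: since $\chi^0 = -b^{0,1}$ by Proposition \ref{prop:b-bounds}(c) and $\chi^2 = -p_g$ by Proposition \ref{prop:ROS}(e), Lemma \ref{b-dR} at $q=1$ (together with the vanishing $b^{0,2}=0$ and $b^{p,1}=0$ for $p\ge 3$) reads $b^{0,1} - b^{1,1} + b^{2,1} = 0$, and one checks $b^{2,1}=0$ from Proposition \ref{prop:b-bounds}(a); thus $b^{1,1} = b^{0,1}$ would give the wrong sign, so instead I must use the alternating sum across \emph{all} $q$ simultaneously, i.e.\ $\sum_p(-1)^p\chi^p = 0$, which with $\chi^0 = -b^{0,1} + (\text{contributions from }q<0)$ and the values of $b^{1-q,q}=b^{2-q,q}=\tau$ for $q<0$ from Proposition \ref{prop:b-bounds}(b) pins down $b^{1,1}$. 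Carefully tracking the $q<0$ terms — which contribute a telescoping $\pm\tau$ that cancels — the net relation becomes $b^{1,1} = b^{0,1} + b^{2,0} - (\tau - p_g) = b^{0,1} + 0 = \ldots$; the precise cancellation is the one genuinely delicate arithmetic step.

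Once $b^{0,1} = 1$ and $b^{1,1} = 0$ are established, part (a) and (b) of the theorem are immediate from \eqref{eq:FHC}: $b^{1,1} = \len \bbH^1(C^1_X) = \len H^1_\cdh(X,\Omega^1)$ so $H^1_\cdh(R,\Omega^1) = 0$, and $b^{0,1} = \len H^1_\cdh(X,\cO) = 1$ so $H^1_\cdh(R,\cO) \cong F$ (it is an $F$-vector space killed by the maximal ideal since it has length $1$ over $R$ and $R/\frakm = F$). Finally, for the concluding assertion about Bass' question, I specialize to $F$ an algebraic extension of $\Q$, so $R$ is a normal domain of dimension $2$ of finite type over $\Q$ (normality holds because $R$ has an isolated singularity and is a hypersurface, hence is $R_1 + S_2$). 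Theorem \ref{mainnk}(a) then gives $NK_0(R) \cong H^1_\cdh(R,\Omega^1) \otimes_\Q V = 0$, so $K_0(R) = K_0(R[t])$; Theorem \ref{mainnk}(b) gives $NK_{-1}(R) \cong H^1_\cdh(R,\cO)\otimes_\Q V \cong V \ne 0$; and Theorem \ref{mainnk}(c), applicable since $NK_0(R) = 0$, yields $K_0(R[t_1,t_2]) \cong K_0(R) \oplus (NK_{-1}(R) \otimes_\Q W) \cong K_0(R) \oplus (V \otimes_\Q W)$. Identifying $V = t\Q[t]$ and $W = \Omega^1_{\Q[t]}$, the tensor product $V\otimes_\Q W$ is (noncanonically) $stF[s,t]$ after extending scalars, so $K_0(R[t_1,t_2]) \cong K_0(R) \oplus stF[s,t] \ne K_0(R)$, which is the desired negative answer. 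I expect the main obstacle to be the sign/cancellation analysis in the second paragraph — getting $b^{1,1}=0$ rather than some positive multiple of $\tau$ or $p_g$ requires assembling Lemma \ref{b-dR}, both parts of Proposition \ref{prop:b-bounds}, and all of Proposition \ref{prop:ROS} into one consistent alternating-sum identity, and a single mis-indexed term there would sink the argument.
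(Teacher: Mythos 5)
Your treatment of part (a) and of the final $K$-theoretic deduction matches the paper: $b^{0,1}=-\chi^0$ is a deformation invariant by Proposition \ref{prop:b-bounds}(c) and Theorem \ref{thm:chi}, it equals $p_g=1$ for the graded fiber $X_0$ by Proposition \ref{prop:ROS}(a,e) and Example \ref{ex:Wahl4.4}, and Theorem \ref{mainnk} converts $b^{1,1}=0$, $b^{0,1}=1$ into the answer to Bass' question. That part is fine.

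However, there is a genuine gap in your argument for $b^{1,1}(X_a)=0$, and it is exactly at the step you flag as ``delicate'' and then leave unresolved (your relation trails off in an ellipsis). The identities you are trying to combine cannot yield the result. Lemma \ref{b-dR} applies only to non-negatively graded rings, hence only to $X_0$, where it correctly gives $b^{1,1}(X_0)=b^{0,1}(X_0)=1$ --- which is \emph{not} the value at $a\ne 0$; the individual invariants $b^{p,q}$ are not constant in the family, only the Euler characteristics $\chi^p$ are. And the single scalar relation $\sum_p(-1)^p\chi^p=0$ (which holds for every fiber, graded or not) cannot separate $b^{1,0}(X_a)$ from $b^{1,1}(X_a)$ inside $\chi^1=b^{1,0}-b^{1,1}$; note also that by Proposition \ref{prop:b-bounds}(a) there are no $q<0$ contributions to $\chi^1$ at all, so there is no ``telescoping $\pm\tau$'' to exploit there. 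The missing ingredient is the \emph{drop in the Tjurina number}: Wahl's computation gives $\tau(X_0)=18$ but $\tau(X_a)=16$ for $a\ne0$, and Proposition \ref{prop:ROS}(d) --- which you never invoke --- gives the bound $b^{1,0}(X_a)\le\tau(X_a)=16$. Combining this with the invariance $\chi^1(X_a)=\chi^1(X_0)=17-1=16$ yields $16=b^{1,0}(X_a)-b^{1,1}(X_a)\le 16-b^{1,1}(X_a)$, forcing $b^{1,1}(X_a)=0$. Without the input $\tau(X_a)=16$ and the inequality of Proposition \ref{prop:ROS}(d), no amount of alternating-sum bookkeeping over the graded fiber will produce the vanishing of $b^{1,1}$ at $a\ne 0$; indeed the statement is false at $a=0$, so any argument that only uses data from $X_0$ and deformation-invariant quantities is doomed.
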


\begin{subremark}
The $cdh$ cohomology groups in question may also be computed using an
explicit description of a resolution of singularities, together with
the self-intersection numbers of the exceptional components. For the
surface in Theorem \ref{thm:dBcompute} for all values of $a$
(including $0$), 
the resolution data
was checked for us by Liz Sell, and is displayed in Figure 1.
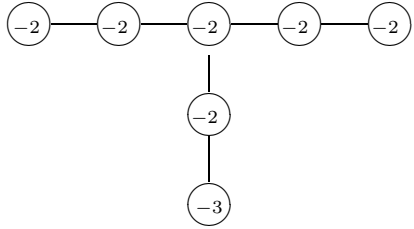
\begin{figure}[h]
\begin{center}
\begin{picture}(200,100)
\setlength{\unitlength}{0.60cm} 
\put(0.0,4){\circle{1}}
\put(-.35,3.8){$\scriptstyle-2$}
\put(2.0,4){\circle{1}}
\put(1.6,3.8){$\scriptstyle-2$}
\put(4.0,4){\circle{1}}
\put(3.6,3.8){$\scriptstyle-2$}
\put(6.0,4){\circle{1}}
\put(5.6,3.8){$\scriptstyle-2$}
\put(8.0,4){\circle{1}}
\put(7.6,3.8){$\scriptstyle-2$}
\put(4.0,2){\circle{1}}
\put(3.6,1.8){$\scriptstyle-2$}
\put(4.0,0){\circle{1}}
\put(3.7,-.2){$\scriptstyle-3$}
\put(.5,4){\line(1,0){1.0}}
\put(2.5,4){\line(1,0){1}}
\put(4.5,4){\line(1,0){1}}
\put(6.5,4){\line(1,0){1}}
\put(4,2.5){\line(0,1){.8}}
\put(4,0.5){\line(0,1){1}}
\end{picture}
\caption{The Resolution graph for $z^2+y^3+x^{10}+ax^7y$}
\end{center}
\end{figure}
\end{subremark}

The proof we shall give here will be a straightforward
application of the invariance of $\chi^p$ (Theorem \ref{thm:chi}),
applied to the specific example:
\begin{equation}\label{eq:deform}
X=\Spec F[x,y,z,t]/(z^2 +  y^3 + x^{10} + tx^7y).
\end{equation}
Consider the map $X \to S = \Spec F[t]$ induced by the
obvious inclusion of rings, and write $X_s$ for the fiber over $s \in S$.
When $s$ is the point $t=a$ we have $X_s=\Spec(R)$ for the ring $R$
in Theorem \ref{thm:dBcompute}.

\begin{prop}\label{prop:Wahl}
Let $X$ be the affine variety of \eqref{eq:deform}.
Then  the integer $\chi^p(X_s)$ is independent of the choice
of closed point $s\in S$. 
\end{prop}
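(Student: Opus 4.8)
The statement is exactly the hypothesis-checking needed to apply the first assertion of Theorem \ref{thm:chi} to the family $X \to S = \Spec F[t]$ of \eqref{eq:deform}. So the plan is to verify, one by one, the hypotheses of that theorem: (i) $X \to S$ is a flat local complete intersection map of affine varieties; (ii) $S = \A^1_F$ is smooth; (iii) the singular locus $X_\sing$ is finite and \'etale over $S$; and (iv) there is a projective resolution $\pi : Y \to X$, an isomorphism away from $X_\sing$, with $Y$ smooth and with the exceptional components $E_1,\dots,E_m$ and all their iterated fiber products $E_{i_1,\dots,i_t}$ smooth over $S$. Once all four are in place, Theorem \ref{thm:chi} gives that $\chi^p(X_s)$ is independent of the closed point $s$, which is precisely the Proposition.

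\textbf{Carrying out the steps.} Step (i): $X = \Spec F[x,y,z,t]/(g)$ with $g = z^2 + y^3 + x^{10} + tx^7y$, so $X$ is a hypersurface in $\A^4_F$, hence a local complete intersection over $F$; and since $g$ has coefficient $1$ on $z^2$, the quotient ring is free (hence flat) as a module over $F[t]$ — indeed $F[x,y,z,t]/(g)$ is free over $F[x,y,t]$ on $1,z$, and $F[x,y,t]$ is free over $F[t]$ — and the fibers $X_s$ are themselves hypersurfaces in $\A^3$, so $X \to S$ is a relative l.c.i. Step (ii) is immediate. Step (iii): compute the Jacobian ideal. Along a fiber $t = a$ the partials are $2z$, $3y^2 + 7ax^6y$, $10x^9 + 7ax^7y$; together with $g$ these force $z = 0$ and then a short computation (as in the hypersurface-singularity discussion of \S\ref{sec:Wahl}, and as already used for $R = R_1$) shows the only singular point of $X_a$ is the origin, for every $a \in F$ including $a = 0$. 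Hence $X_\sing$ is the section $t \mapsto (0,0,0,t)$, which is a copy of $S$ mapping isomorphically — in particular finitely and \'etale — to $S$. Step (iv) is the substantive one: I would construct $Y \to X$ as a simultaneous (``equisingular'') resolution of the family, e.g. by an explicit sequence of blow-ups along $X_\sing$ and along smooth centers over it, using the resolution data displayed in Figure 1 (all exceptional curves $(-2)$-curves except one $(-3)$-curve, with the indicated incidence graph) as a blueprint; because the resolution graph is constant in the family — the reference to Liz Sell's computation covers all $a$, including $0$ — the blow-up centers can be chosen to be smooth over $S$, and then each $E_i \to S$ is a smooth morphism, each pairwise intersection $E_i \cap E_j$ is either empty or a section of $S$ (hence smooth over $S$), and triple intersections are empty (since the graph is a tree of curves). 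This verifies all the $E_{i_1,\dots,i_t} \to S$ are smooth.

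\textbf{Main obstacle.} The only real work is Step (iv): producing a single projective $Y \to X$ that restricts to a good resolution on \emph{every} fiber $X_s$ with exceptional divisor of constant combinatorial type, i.e.\ an honest simultaneous resolution of the whole family over $\A^1$, rather than merely resolving each fiber separately. The key input making this possible is that the deformation $z^2 + y^3 + x^{10} + tx^7y$ is \emph{equisingular} — the total multiplicity sequence / resolution graph does not jump as $t$ varies (this is exactly the content of Wahl's study \cite{Wahl} of this example and is confirmed by the resolution-graph computation recorded in Figure 1, valid for all $a$ including $a=0$). Granting equisingularity, the blow-up centers used to resolve the central fiber deform to smooth centers over $S$ and the iterated blow-up of $X$ along them is the desired $Y$; smoothness of $Y$ and of all the $E_{i_1,\dots,i_t}$ over $S$ then follows from the fact that each blow-up is along a regularly embedded center that is smooth over $S$. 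With Step (iv) established, Theorem \ref{thm:chi} applies verbatim and yields the Proposition.
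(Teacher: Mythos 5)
Your reduction to Theorem \ref{thm:chi} is the right idea, and steps (i)--(iii) are fine (modulo a slip in the partials: $\partial g/\partial y = 3y^2 + ax^7$ and $\partial g/\partial x = 10x^9+7ax^6y$, though the conclusion that $X_\sing$ is the zero section is correct). The genuine gap is step (iv), which you yourself flag as ``the substantive one'' but then do not actually carry out. Asserting that ``the blow-up centers can be chosen to be smooth over $S$'' because the resolution graph of the fibers is constant is not a proof: Figure 1 and the cited computation describe the resolution of each \emph{fiber} separately, and constancy of the fiberwise resolution graphs does not by itself produce a single projective $Y\to X$ with $Y$ (and every $E_{i_1,\dots,i_t}$) smooth over $S$ restricting to a good resolution on each fiber. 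Existence of such a simultaneous resolution without base change is exactly the delicate point for families of surface singularities (compare the need for base change in Brieskorn--Artin simultaneous resolution), and for this family --- where $\tau$ jumps from $18$ to $16$, so the family is not equisingular in every sense --- it would require either an explicit multi-stage blow-up computation over $S$ (seven exceptional components' worth of charts) or an appeal to a specific equiresolution theorem with verified hypotheses. You supply neither.

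The paper sidesteps this difficulty entirely, which is why Theorem \ref{thm:chi} has a second, equivariant clause. One passes to the cover $\tilde X=\Spec F[u,v,w,t]/(u^{30}+v^{30}+w^{30}+tu^{21}v^{10})$ with $G=\mu_3\times\mu_{10}\times\mu_{15}$ acting so that $X\cong\tilde X/G$ via $x=u^3$, $y=v^{10}$, $z=w^{15}$. The point is that $\tilde X$ is resolved by a \emph{single} blow-up of its singular locus (a section over $S$), and a direct chart computation shows $\tilde Y\to S$ is smooth with exceptional divisor the constant family $S\times\{A^{30}+B^{30}+C^{30}=0\}$; the equivariant part of Theorem \ref{thm:chi} then gives the constancy of $\chi^p(X_s)=\chi^p(\tilde X_s/G)$. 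If you want to keep your direct approach, you must either exhibit the simultaneous resolution of $X$ over $S$ explicitly or replace step (iv) by this cyclic-cover argument.
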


\begin{proof} Since the value of $\chi^p$ does not change upon
passing to a finite extension,  we may
  assume that $F$ contains a primitive $30$-th root of unity.
Put
$$
\tilde{X} = \Spec F[u,v,w,t]/(u^{30} + v^{30} +
  w^{30} + tu^{21}v^{10}).
$$
Let $G = \mu_{3} \times \mu_{10} \times \mu_{15}$ act on $\tilde{X}$
by scalar multiplication on the variables $x,y,z$ 
so that the assignment $x = u^3$, $y = v^{10}$ and $z = w^{15}$
identifies $X$ with $\tilde{X}/G$.

The map $X\to S$ is a flat
local complete intersection whose singular locus is defined by
$x=y=z=0$ and hence maps isomorphically onto $S$. 
The singular locus of $\tilde{X}$ is defined by $u=v=w=0$
and hence also maps isomorphically onto $S$. 
Let $\tilde{Y}$ be the blowup of $\tilde{X}$
along its singular locus. Then
$$
\tilde{Y} = \Proj \left(\frac{F[t, u, v, w, A, B, C]}
{(A^{30} + B^{30} + C^{30} + tuB^{10} A^{20}, uB-vA, uC-wA, vC-wB)}\right),
$$
where $t, u, v, w$ have degree $0$ and $A,B,C$ have degree $1$.
Direct calculations show that $\tilde{Y} \to S$ is smooth and
the fiber of $\tilde{Y} \to \tilde{X}$ over
$\tilde{X}_{\sing}$ is
$$
\tilde{E} = \Proj F[t,A, B, C]/(A^{30} + B^{30} + C^{30})
\cong S \times E_0
$$
where $E_0$ is a smooth curve.  We see that all the hypotheses of
Theorem \ref{thm:chi} are satisfied.
\end{proof}

Since $X_0$ is quasi-homogeneous,  its du Bois invariants may be
computed, as shown in the following example.
These calculations and the above
results lead to the proof of Theorem \ref{thm:dBcompute} below.

\begin{ex}\label{ex:Wahl4.4}
The surface $X_0=\Spec F[x,y,z]/(z^2+y^3+x^{10})$
is discussed by Wahl in \cite[4.4]{Wahl}.
Elementary calculations, described in \cite[4.3]{Wahl}, give
that $\tau=1\cdot2\cdot9=18$, $g=0$ and
\[
p_g=\dim \left(F[x,y,z]/{\textstyle (\fx, \fy, \fz)}\right)_{\le2}=1
\]
where $f=z^2+y^3+x^{10}$. Moreover, 
as with any isolated normal surface
singularity defined by a non-negatively graded ring,
we have $l=0$ by \cite[Theorem 2.3.1]{OW}. (Or, one may see $l=0$ from
the graph of Figure ~1.) 
Using Lemma \ref{b-dR} and Proposition \ref{prop:ROS}(a,e), this yields
\[
b^{1,1} = b^{0,1} = p_g-g-l=1, \quad %b^{1,1}(X_0)=p_g-g-l-\alpha=1.
b^{1,0} = b^{2,0} = \tau-p_g=17.
\]
By Proposition \ref{prop:b-bounds}(a), 
$\chi^0=-b^{0,1}=-1$, $\chi^1=b^{1,0}-b^{1,1}=16$, $\chi^2=-1$.
\end{ex}

\begin{proof}[Proof of Theorem \ref{thm:dBcompute}]
By Theorem \ref{thm:chi}, $\chi^p(X_s)$ does not depend on
$s$ and we write $\chi^p = \chi^p_s$. By Proposition \ref{prop:b-bounds}(c),
$b^{0,1}=-\chi^0$ is also independent of $s$. For the choice $s=0$,
we have $b_0^{0,1} = 1$ by \cite[4.4]{Wahl} (see Example
\ref{ex:Wahl4.4}).
This proves assertion (a).
To compute $b^{1,1}$ when $a\ne0$, 
we use the calculation of $\tau(X_a)$ given in \cite[4.4]{Wahl}:
\begin{equation}\label{tauW}
\tau(X_a)=\begin{cases}18 & a=0\\ 16& a\ne 0. \end{cases}
\end{equation}
By Proposition \ref{prop:ROS}(d)
\begin{equation}\label{b1016}
b^{1,0}(X_a)\le \tau(X_a) = 16 \quad \text{for all $a \ne 0$.}
\end{equation}

\noindent
By the invariance of $\chi^1$ (see Proposition \ref{prop:Wahl}),
Example \ref{ex:Wahl4.4} and \eqref{b1016}, we have
\begin{align*}
16 = \chi^1 &= b^{1,0}(X_a) - b^{1,1}(X_a) \\
             &\leq 16 - b^{1,1}(X_a) 
\end{align*}
for any $a \ne 0$, and hence
$0 = b^{1,1}(X_a) = \dim_F H^1_\cdh(X_a, \Omega^1_{}).$

The final assertion follows from Theorem \ref{mainnk}.
\end{proof}

\begin{rem} We conclude with a few remarks.

\begin{enumerate}
\item[(a)]
In \eqref{tauW} of the proof, 
we refer to the calculation of the Tjurina numbers $\tau$ 
stated by Wahl in \cite[4.4]{Wahl}. These can be
checked directly using the Tjurina function of the {\sc Singular}
library {\tt sing.lib} (\cite{GPS05}, \cite{GM}).

\item[(b)] Steenbrink uses analytic methods to define an invariant
$\alpha$ and proves that $b^{1,1}=p_g-g-l-\alpha$; see
\cite[(1.9.1)]{Wahl}. Comparing with Proposition
\ref{prop:b-bounds}(a), and using GAGA, 
we see that $\alpha = b^{0,1}-b^{1,1}$.
It is this invariant that is computed by Wahl in \cite[4.4]{Wahl}.

\item[(c)] If $R_F=F[x,y,z]/(z^2+y^3+x^{10})$ and $F$ is not algebraic
over $\Q$, then $NK_0(R_F)$ is nonzero. 
Indeed, $NK_0(R_F) \cong \Omega^1_{F/\Q}\oo_F tF[t].$ This follows from
\cite[(7.4)]{nk}, which says that
\[ 
NK_0(R_F)\cong NK_0(R_{\Q})\oo_{\Q}F \oplus 
            NK_{-1}(R_{\Q})\oo_{\Q}\Omega^1_{F/\Q},
\]
since $NK_0(R_{\Q})=0$ and $NK_{-1}(R_F) \cong tF[t]$ 
by Theorems \ref{mainnk}(b) and \ref{thm:dBcompute}(a).
\end{enumerate}
\end{rem}

\subsection*{Acknowledgements}
The authors would like to thank M.~Schlichting, whose contributions
go beyond the collaboration \cite{chsw}. We would also like to thank
W.~Vasconcelos and L.~Avramov for useful discussions,
and E.~Sell and J.~Wahl for their help in
checking our resolution of singularities.

%\bibliographystyle{plain}
%\bibliography{tkrefs}

\def\cprime{$'$}

\end{document}